\newtheorem{lemma}{Lemma}
\newtheorem{assumption}{Assumption}
\newtheorem{proposition}{Proposition}
\newtheorem{corollary}{Corollary}
\newcommand{\an}[1]{{\color{black}#1}}
\newcommand{\fy}[1]{{\color{black}#1}}
 \newcommand{\remove}[1]{}
\newcommand{\EXP}[1]{\mathsf{E}\!\left[#1\right] }
\def\sF{\mathcal{F}}
\def\Real{\mathbb{R}}
\def\g{\gamma}
\def\e{\epsilon}
\def\a{\alpha}
 \def\n{\nu}
\author{Farzad~Yousefian,   %~\IEEEmembership{Member,~IEEE,}
        Angelia~Nedi\'c, and   %~\IEEEmembership{Fellow,~OSA,} 
		Uday V.~Shanbhag \thanks{Yousefian and Nedi\'{c} are with the Department of Industrial and Enterprise
Systems Engineering, University of Illinois, Urbana, IL 61801, USA,
{\tt\small \{yousefi1,angelia\}@illinois.edu}. Shanbhag is with the Department of Industrial and Manufacturing Engineering, Pennsylvania State University, University Park, PA 16802, USA, {\tt\small udaybag@psu.edu}. Nedi\'{c} and
Shanbhag gratefully acknowledge the support of the NSF through the award
NSF CMMI 0948905 ARRA.}}
\title{\LARGE \bf A {distributed} adaptive steplength stochastic approximation
	method \\ for  monotone stochastic Nash Games}
\begin{document}
\maketitle
\thispagestyle{empty}
\pagestyle{plain}
\vspace{-0.5in}
\begin{abstract}
\an{We} consider a distributed stochastic approximation \fy{(SA)} scheme
for computing an equilibrium of a stochastic Nash game. Standard
\fy{SA} schemes employ diminishing steplength
sequences that are square summable but not summable. 
\an{Such requirements provide a little or no guidance for how to} leverage
Lipschitzian and monotonicity properties of the problem and naive
choices (such as $\gamma_k = 1/k$) generally do not preform uniformly
well on a breadth of problems. {While a centralized adaptive stepsize
SA scheme is proposed in \cite{Farzad1} for the optimization framework,
   such a scheme provides no freedom for the agents in choosing their
	   own stepsizes.  Thus, \an{a} direct application of centralized
	   stepsize schemes is impractical in solving Nash games.}
	   Furthermore, extensions to game-theoretic regimes where players
	   may independently choose steplength sequences are limited to
	   recent work by Koshal et al.~\cite{koshal10single}.  Motivated by
	   these shortcomings, we present a distributed algorithm in which
	   each player updates his steplength based on the previous
	   steplength and \an{some} problem parameters. The steplength rules
	   are derived from minimizing an upper bound {of} \an{the errors
		   associated with players' decisions.  It is shown that these
			   rules generate sequences that  converge} almost
			   surely to an equilibrium of the stochastic Nash game.
			   Importantly, variants of this rule are suggested where
			   players independently select steplength sequences while
			   abiding by an overall coordination requirement.
			   Preliminary numerical results are seen to be promising.
			   \end{abstract} \maketitle

%%%%%%%%%%%%%%%%%%%%%%%%%%
\section{Introduction}\label{sec:introduction}

We consider a class of stochastic Nash games in which every player
solves a stochastic convex program parametrized by adversarial
strategies. Consider an $N$-person stochastic Nash game in which the
$i$th player solves the parametrized convex problem
\begin{equation}\label{eqn:problem}
\min_{x \in X_i}  \EXP{f_i(x_i,x_{-i},\xi_i)},
\end{equation}
where $x_{-i}$ denotes the collection $\{x_j,j\neq i\}$ of decisions of all players other than player $i$. 
For each $i$, the vector $\xi_i:\Omega_i \rightarrow \Real^{n_i}$ is a random vector with a
probability distribution on some set,  %$\Omega_i \subseteq \mathbb{R}^{n_i}$,
while the function $\EXP{f_i(x_i,x_{-i},\xi_i)}$ is \fy{strongly} convex in $x_i$ for
all $x_{-i} \in \prod_{j \neq i} X_j$. For every $i$, the set $X_i \subseteq\Real^{n_i}$ is closed and convex. We focus on the resulting
stochastic variational inequality (VI) and consider the development of
distributed stochastic approximation schemes that rely on adaptive
steplength sequences. Stochastic
approximation techniques have a long tradition. First proposed by Robbins
and Monro~\cite{robbins51sa} for differentiable functions and
Ermoliev~\cite{Ermoliev76,Ermoliev83,Ermoliev88}, significant effort has
been applied towards theoretical and algorithmic examination of such
schemes (cf.~\cite{Borkar08,Kush03}). Yet, there has been markedly
little on the application of such techniques to solution of stochastic
variational inequalities.  Exceptions include the work by Jiang and
Xu~\cite{Houyuan08}, and more recently by Koshal~et
al.~\cite{koshal10single}. The latter, in particular, develops a single
timescale stochastic approximation scheme for precisely the class of
problems being studied here viz. monotone stochastic Nash games.

Standard stochastic approximation schemes provide little guidance
regarding the choice of a steplength sequence, apart from requiring that
the sequence, denoted by $\{\gamma_k\}$, satisfies 
$ \sum_{k=1}^{\infty} \gamma_k = \infty \mbox{ and }
\sum_{k=1}^{\infty} \gamma_k^2 < \infty.$ This paper is motivated by
the need to develop {\em adaptive} steplength sequences that can be {\em
independently} chosen by players under a limited coordination, while
guaranteeing the overall convergence of the scheme. Adaptive stepsizes
have been effectively used in gradient and subgradient algorithms.
Vrahatis et al. \cite{Vrahatis00} presented a class of gradient  
algorithms with adaptive stepsizes for unconstrained minimization. %These algorithms follow the steepest descent direction and use a local estimate of the Lipschitz constant. 
Spall~\cite{Spall98} developed a
general adaptive SA algorithm based on using a simultaneous perturbation
approach for estimating the Hessian matrix. Cicek et al.
\cite{Zeevi11} considered the Kiefer-Wolfowitz (KW) SA algorithm and
derived general upper bounds on its mean-squared error, together with an adaptive
version of the KW algorithm. {Ram et al. \cite{Ram09} considered distributed stochastic subgradient algorithms for convex optimization problems and studied the effects of stochastic errors
on the convergence of the proposed algorithm. \fy{Lizarraga et al. \cite{Liz08} considered a family of two person Mutil-Plant game and developed Stackelberg-Nash equilibrium conditions based on the Robust Maximum Principle.} More recently, Yousefian et al.
	\cite{Farzad1,Farzad2} developed centralized adaptive stepsize SA schemes
		for solving stochastic optimization problems and variational inequalities. The main
			contribution of the current paper lies in developing a class of \textit{distributed} adaptive \textit{stepsize rules} for SA scheme in which each agent chooses its own stepsizes without any specific information about other agents stepsize policy. This degree of freedom in choosing the stepsizes has not been addressed in the centralized schemes.}
% Pasupathy et al. \cite{Pasupathy10}} 

Before proceeding, we briefly motivate the question of distributed
computation of Nash equilibria from two different standpoints: (i) First,
the Nash game can be viewed as a competitive analog of a stochastic
\fy{multi-user} convex optimization problem of the form
$ \min_{\fy{x \in X}} \sum_{i=1}^N \EXP{ f_i(x_i,x_{-i},\xi_i)} .$
Furthermore, under the assumption that equilibria of the associated
stochastic Nash game are efficient, our scheme provides a
distributed framework for computing solutions to this problem. In such a
setting, we may prescribe that players employ stochastic approximation
schemes since the Nash game represents an engineered construct employed
for computing solutions; (ii) A second perspective is one drawn from a
bounded rationality approach towards distributed computation of Nash
equilibria. A fully rational avenue for computing equilibria suggests
that each player employs a best response mapping in updating strategies,
based on what the competing players are doing. Yet, when faced by
computational or time constraints, players may instead take a gradient
step. We work in precisely this regime but allow for flexibility in
terms of the steplengths chosen by the players. 

In this paper, we consider the solution of a stochastic Nash game
whose equilibria are completely captured by a stochastic variational
inequality with a strongly monotone mapping. Motivated by the need for
efficient distributed simulation methods for computing solutions to such
problems, we present a distributed scheme in which each player employs
an adaptive rule for prescribing steplengths.  Importantly, these rules
can be  implemented with relatively little coordination by any given
player and collectively lead to iterates that are shown to converge to
the unique equilibrium in an almost-sure sense.  

This paper is organized as follows. In Section \ref{sec:formulation}, we
introduce the formulation of a stochastic Nash games in which every
player solves a stochastic convex problem. In Section
\ref{sec:convergence}, we show the almost-sure convergence of the SA
algorithm under specified assumptions. In Section \ref{sec:recursive
scheme}, motivated by minimizing a suitably defined error bound, we
develop an adaptive steplength stochastic approximation framework in
which every player {\em adaptively} updates his steplength.  It is shown
that the choice of adaptive steplength rules can be obtained
independently by each player under a limited coordination.  Finally, in
Section \ref{sec:numerical}, we provide some numerical results from a
stochastic flow management game drawn from a communication network
setting.

\textbf{Notation:} Throughout this paper, a vector $x$ is assumed to be
a column vector. We write $x^T$ to denote the transpose of a vector $x$.
$\|x\|$ denotes the Euclidean vector norm, i.e., $\|x\|=\sqrt{x^Tx}$. We
use $\Pi_X(x)$ to denote the Euclidean projection of a vector $x$ on a
set $X$, i.e., $\|x-\Pi_X(x)\|=\min_{y \in X}\|x-y\|$. Vector $g$ is a
\textit{subgradient} of a convex function $f$ with domain dom$f$ at
$\bar x \in \hbox{dom}f$ when $f(\bar x) +g^T(x-\bar x) \leq f(x)$ for
all $x \in \hbox{dom}f.$ The set of all subgradients of $f$ at $\bar x$ is 
denoted by $\partial f(\bar x)$. We write \textit{a.s.} as the
abbreviation for ``almost surely'', and use $\EXP{z}$ to denote the expectation of a random variable~$z$. %We use $n!!$ to denote the double
%factorial of a positive integer $n$. 
% The set-valued map SOL$(X,F)$ represents the solution set of a variational inequality VI$(X,F)$.

\section{Problem formulation}\label{sec:formulation}
In this section, we present (sufficient) conditions associated with equilibrium  points of
the stochastic Nash game defined by \eqref{eqn:problem}. 
The equilibrium conditions of this game %\eqref{eqn:problem}
can be characterized by
a stochastic variational inequality problem denoted by VI$(X,F)$, where 
\begin{align}\label{eqn:VI_elements}
F(x)\triangleq \left( \begin{array}{ccc}
\nabla _{x_1} \EXP{f_1(x, \xi_1)}  \\
\vdots \\  \nabla _{x_N} \EXP{f_N(x, \xi_N)} \end{array} \right), \quad X= \prod_{i=1}^N X_i,
\end{align}
with $x \triangleq (x_1,\ldots,x_N)^T$ and $x_i \in X_i{\subseteq \mathbb{R}^{n_i}}$ for
$i=1,\ldots,N$. Given a set $X \subseteq \Real^n$ and a single-valued
mapping $F:X \to \Real^n$, then a vector $x^* \in X$ solves a  variational inequality VI$(X,F)$, if
 \begin{align}\label{eqn:VI}
(x-x^*)^TF(x^*) \geq 0 \hbox{ for all } x \in X.
\end{align} 
\fy{Let $n=\sum_{i=1}^N n_i$, and note that when the sets $X_i$ are convex and closed for all
$i$, the set $X \in \Real^{n}$ is closed and convex.}

In the context of solving the stochastic variational inequality
VI$(X,F)$ in (\ref{eqn:VI_elements})-(\ref{eqn:VI}), suppose each player employs a stochastic
approximation scheme for  given by
\begin{align}
\begin{aligned}
x_{{k+1},i} & =\Pi_{X_i}\left(x_{k,i}-\g_{k,i} ( F_i(x_k)+w_{k,i})\right),\cr 
w_{k,i} &\triangleq \hat F_i(x_k,\xi_k)-F_i(x_k),
\end{aligned}\label{eqn:algorithm_different}
\end{align}
for all $k\ge 0$ and $i=1,\dots,N$, where
$\g_{k,i} >0$ is the stepsize of the $i$th player at iteration $k$,  	
$x_k =(x_{k,1}\  x_{k,2}\ \ldots \ x_{k,N})^T$,
$\xi_k =(\xi_{k,1}\  \xi_{k,2}\ \ldots \ \xi_{k,N})^T$, \fy{$F_i=\EXP{\nabla _{x_i} f_i(x, \xi_i)}$,} and 
\begin{align*}	
\hat F(x,\xi)\triangleq \left( \begin{array}{ccc}
\nabla _{x_1} 	f_1(x, \xi_1) \\
\vdots \\  \nabla _{x_N} {f_N(x		, \xi_N)} \end{array} \right), \quad	
 \xi \triangleq \left( \begin{array}{ccc}
\xi_{1}  \\
\vdots \\  \xi_{N} \end{array} \right).
\end{align*}
\fy{Note that in terms of the definition of $w_{k,i}$, $F_i$, and $\hat F_i$,  $\EXP{w_{k,i}\mid \sF_k}=0.$}
In addition, $x_0 \in X$ is a random
initial vector independent of the random variable $\xi$ and such
that $\EXP{\|x_0\|^2}<\infty$. Note that each player uses its individual
stepsize to update its decision. 

\section{A Distributed SA scheme}\label{sec:convergence}
In this section, we present conditions under which algorithm
\eqref{eqn:algorithm_different} converges almost surely to the solution of game
\eqref{eqn:problem} under suitable assumptions on the
	mapping. %Before proceeding, it is worth differentiating the
%	proposed scheme from that  proposed in the past for strongly
%	monotone variational problems. First, Jiang and Xu~\cite{Houyuan08}
%	considered a single steplength sequence that requires centralized
%	prescription. Subsequently, Koshal et al.~\cite{koshal10single}
%	employed regularization sequences for constructing two sets of 
%	rules for constructing such sequences. 
Also, we develop a distributed variant of a standard
stochastic approximation scheme and provide conditions on the
steplength sequences that lead to almost-sure convergence of the
iterates to the unique solution. Our assumptions include
requirements on the set $X$ and the mapping $F$. 

\begin{assumption}\label{assum:different_main}
Assume the following:
\begin{enumerate} 
\item[(a)] The sets $X_i \subseteq \Real^{n_i}$ are closed and convex.
\item[(b)] \fy{$F(x)$ is {strongly monotone with constant $\eta >0$ and Lipschitz continuous with constant $L$ over the set $X$.}}
%\[ (F(x)-F(y))^T(x-y)\ge \eta\|x-y\|^2\qquad\hbox{for all }x,y\in X.\]  \fy{$F(x)$ is Lipschitz continuous with constant $L$ over the set $X$} 
%\[ \|F(x)-F(y)\|\le L\|x-y\|\qquad\hbox{for all }x,y\in X.\]}
\end{enumerate}
\end{assumption}
%\vspace{-.2in}
{\textbf{Remark:} The strong monotonicity is assumed to hold throughout the paper. Although the convergence results may still hold with a weaker assumption, such as strict monotonicity, but the stepsize policy in this paper leverages the strong monotonicity parameter which prescribes a more parametrized stepsize rule. This is the main reason that we assumed the stronger version of monotonicity. In Section \ref{sec:numerical}, we present an example where such an assumption is satisfied.} 

Another set of assumptions is for the stepsizes employed by each
player in algorithm \eqref{eqn:algorithm_different}.
\begin{assumption}\label{assum:step_error_sub}
Assume that:
\begin{enumerate} 
 \item[(a)] The {stepsize sequences are} such that $\gamma_{k,i}>0$ for all $k$ and $i$, with
$\sum_{k=0}^\infty \gamma_{k,i} = \infty$ and $\sum_{k=0}^\infty
\gamma_{k,i}^2 < \infty$.
\item [(b)] There exists a scalar $\beta$ such that $0\le \beta<\frac{\eta}{L}$   and
 $\frac{\Gamma_k-\delta_k}{\delta_k}\le \beta$ for all $k\ge 0$, where $\delta_k$ and $\Gamma_k$ are (fixed) positive sequences satisfying $\delta_k \leq \min_{i=1,\ldots,N}{\g_{k,i}}$ and $\Gamma_k \geq \max_{i=1,\ldots,N}{\g_{k,i}}$ for all $k\ge0$.
\end{enumerate}
\end{assumption}
We let $\sF_k$ denote the history of the method up to time $k$, i.e., 
$\sF_k=\{x_0,\xi_0,\xi_1,\ldots,\xi_{k-1}\}$ for $k\ge 1$ and $\sF_0=\{x_0\}$. {Consider the following assumption on the stochastic errors, $w_k$, of the algorithm.}
\begin{assumption}\label{assum:w_k_bound} 
The errors $w_k$ are such that 
for some {constant} $\nu >0$,
\[\EXP{\|w_k\|^2 \mid \sF_k} \le \nu^2 \qquad \hbox{\textit{a.s.} for all $k\ge0$}.\]  
\end{assumption}
We \fy{use the Robbins-Siegmund lemma} in establishing the convergence of
method~(\ref{eqn:algorithm_different}), which can be found
in~\cite{Polyak87} (cf.~Lemma 10, page 49).  
\begin{lemma}\label{lemma:probabilistic_bound_polyak}
Let $\{v_k\}$ be a sequence of nonnegative random variables, where 
$\EXP{v_0} < \infty$, and let $\{\a_k\}$ and $\{\mu_k\}$
be deterministic scalar sequences such that:
\begin{align*}
& \EXP{v_{k+1}|v_0,\ldots, v_k} \leq (1-\alpha_k)v_k+\mu_k
\qquad a.s. \ \hbox{for all }k\ge0, \cr
& 0 \leq \alpha_k \leq 1, \quad\ \mu_k \geq 0, \cr
& \quad\ \sum_{k=0}^\infty \alpha_k =\infty, 
\quad\ \sum_{k=0}^\infty \mu_k < \infty, 
\quad\ \lim_{k\to\infty}\,\frac{\mu_k}{\alpha_k} = 0. 
\end{align*}
Then, $v_k \rightarrow 0$ almost surely. 
\end{lemma}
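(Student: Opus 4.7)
The plan is to convert the supermartingale-like inequality into an honest nonnegative supermartingale by shifting $v_k$ by the tail of $\sum\mu_j$, apply Doob's convergence theorem to obtain almost-sure convergence of $v_k$ to some limit $v_\infty$, and then identify $v_\infty=0$ by combining a deterministic recursion for $\EXP{v_k}$ with Fatou's lemma.

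Concretely, let $\sF_k=\sigma(v_0,\ldots,v_k)$ and define $s_k := \sum_{j\ge k}\mu_j$ and $u_k := v_k + s_k$. Since $\sum_j \mu_j<\infty$, each $s_k$ is finite and deterministic, and $s_k\downarrow 0$. Using the hypothesis together with $\alpha_k v_k \ge 0$, a direct computation gives
\[ \EXP{u_{k+1}\mid \sF_k} \le (1-\alpha_k)v_k + \mu_k + s_{k+1} = u_k - \alpha_k v_k \le u_k, \]
so $\{u_k\}$ is a nonnegative supermartingale with $\EXP{u_0}=\EXP{v_0}+s_0<\infty$. Doob's martingale convergence theorem then yields a nonnegative random variable $u_\infty$ with $u_k\to u_\infty$ almost surely, and since $s_k\to 0$ deterministically we conclude $v_k\to v_\infty := u_\infty$ almost surely.

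To identify $v_\infty$ with $0$, I would take unconditional expectations of the hypothesis to obtain the scalar recursion $a_{k+1}\le (1-\alpha_k)a_k+\mu_k$ with $a_k:=\EXP{v_k}$. An elementary deterministic lemma (of Chung type) shows that $\alpha_k\in[0,1]$, $\sum_k \alpha_k=\infty$, and $\mu_k/\alpha_k\to 0$ together force $a_k\to 0$: one unrolls the recursion to get $a_k\le \prod_{j<k}(1-\alpha_j)\,a_0 + \sum_{j<k}\mu_j\prod_{j<i<k}(1-\alpha_i)$, splits the latter sum at an index $K$ past which $\mu_j/\alpha_j<\varepsilon$, and uses that $\sum\alpha_j=\infty$ makes both the leading product and the tail sum vanish. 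Fatou's lemma then gives $\EXP{v_\infty}\le \liminf_k \EXP{v_k}=0$, and nonnegativity forces $v_\infty=0$ almost surely, which is the claim.

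The main obstacle is the deterministic recursion step. The geometric contraction provided by the $(1-\alpha_k)$ factors competes with the accumulation of the $\mu_k$ errors, and when $\alpha_k$ decays slowly the two effects must be balanced carefully; the hypothesis $\mu_k/\alpha_k\to 0$ is precisely what is needed to ensure that the convolution of errors is dominated by a vanishing product. Once this scalar lemma is in place, the supermartingale construction and the Fatou step are routine and yield the conclusion.
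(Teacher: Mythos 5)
The paper does not prove this lemma at all: it is quoted verbatim as the Robbins--Siegmund result, with a pointer to Polyak's book (Lemma 10, p.~49), so there is no in-paper argument to compare against. Your proof is correct and is essentially the classical proof that the citation points to. The supermartingale construction is sound: with $s_k=\sum_{j\ge k}\mu_j$ one indeed gets $\EXP{u_{k+1}\mid \sF_k}\le u_k-\alpha_k v_k\le u_k$ because $\mu_k+s_{k+1}=s_k$, and Doob's theorem plus $s_k\downarrow 0$ gives almost-sure convergence of $v_k$. The deterministic step also closes: unrolling $a_{k+1}\le(1-\alpha_k)a_k+\mu_k$ and splitting at $K$ with $\mu_j\le\varepsilon\alpha_j$ for $j\ge K$, the tail is controlled by the telescoping identity $\sum_{j=K}^{k-1}\alpha_j\prod_{i=j+1}^{k-1}(1-\alpha_i)=1-\prod_{i=K}^{k-1}(1-\alpha_i)\le 1$, while the head and the leading term vanish because $\prod(1-\alpha_i)\le\exp\bigl(-\sum\alpha_i\bigr)\to 0$; Fatou then identifies the limit as zero. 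The only cosmetic caveat is that the hypothesis $\mu_k/\alpha_k\to 0$ implicitly presumes $\alpha_k>0$ for large $k$ (a feature of the lemma statement itself, not of your argument). As written, your proposal is a complete, self-contained substitute for the external citation.
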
 

The following lemma provides an error bound for algorithm \eqref{eqn:algorithm_different} {under Assumption \ref{assum:different_main}.}
\begin{lemma}\label{lemma:error_bound_1}
Consider algorithm~\eqref{eqn:algorithm_different}. 
Let Assumption \ref{assum:different_main} hold. Then, the following relation holds \fy{a.s.} for all $k \ge 0$:
\begin{align}\label{ineq}
&\EXP{\|x_{k+1}-x^*\|^2\mid\sF_k} \leq 	\Gamma_k^2\EXP{\|w_k\|^2\mid\sF_k}\cr
&  +(1-2(\eta+L)\delta_k+2L\Gamma_k+L^2\Gamma_k^2)\|x_	k-x^*\|^2.
\end{align} 
\end{lemma}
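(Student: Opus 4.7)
The plan is to start from the projection update per player, exploit nonexpansivity, sum across players, take conditional expectation, and then handle the cross term arising from strong monotonicity with care since the stepsizes $\gamma_{k,i}$ differ across $i$. The first step is to observe that, because $X=\prod_i X_i$ and $x^*$ solves VI$(X,F)$, the variational inequality decouples per player: $(x_i-x_i^*)^T F_i(x^*)\ge 0$ for all $x_i\in X_i$, which is equivalent to the fixed-point relation $x_i^*=\Pi_{X_i}(x_i^*-\gamma_{k,i} F_i(x^*))$ for any $\gamma_{k,i}>0$. Subtracting this from \eqref{eqn:algorithm_different} and using nonexpansivity of $\Pi_{X_i}$ gives
\[
\|x_{k+1,i}-x_i^*\|^2 \le \bigl\|(x_{k,i}-x_i^*)-\gamma_{k,i}(F_i(x_k)-F_i(x^*))-\gamma_{k,i} w_{k,i}\bigr\|^2.
\]

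Expanding the square produces six terms. Summing over $i$ and taking $\EXP{\cdot\mid\sF_k}$, the two cross terms that are linear in $w_{k,i}$ drop out: since $x_k$ is $\sF_k$-measurable and $\EXP{w_{k,i}\mid\sF_k}=0$, both $\EXP{\sum_i \g_{k,i}(x_{k,i}-x_i^*)^T w_{k,i}\mid\sF_k}$ and $\EXP{\sum_i \g_{k,i}^2(F_i(x_k)-F_i(x^*))^T w_{k,i}\mid\sF_k}$ vanish. The residual quadratic terms are then bounded by replacing the per-player stepsizes by their common upper bound $\Gamma_k$: Lipschitz continuity of $F$ yields $\sum_i \g_{k,i}^2\|F_i(x_k)-F_i(x^*)\|^2\le \Gamma_k^2\|F(x_k)-F(x^*)\|^2\le L^2\Gamma_k^2\|x_k-x^*\|^2$, and the noise bound reduces to $\sum_i \g_{k,i}^2\EXP{\|w_{k,i}\|^2\mid\sF_k}\le \Gamma_k^2\EXP{\|w_k\|^2\mid\sF_k}$.

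The main obstacle is the monotonicity cross term $-2\sum_i \g_{k,i}(x_{k,i}-x_i^*)^T(F_i(x_k)-F_i(x^*))$. Because the $\g_{k,i}$ are not equal across players, one cannot factor them out and apply strong monotonicity directly. The trick is to write $\g_{k,i}=\delta_k+(\g_{k,i}-\delta_k)$ with $0\le \g_{k,i}-\delta_k\le \Gamma_k-\delta_k$. The $\delta_k$-part factors out and strong monotonicity gives $\sum_i (x_{k,i}-x_i^*)^T(F_i(x_k)-F_i(x^*))=(x_k-x^*)^T(F(x_k)-F(x^*))\ge \eta\|x_k-x^*\|^2$, contributing $\delta_k\eta\|x_k-x^*\|^2$. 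The residual part is controlled in absolute value by $(\Gamma_k-\delta_k)\sum_i\|x_{k,i}-x_i^*\|\|F_i(x_k)-F_i(x^*)\|$, which, by the Cauchy--Schwarz inequality applied to the vectors of norms followed by the Lipschitz bound on $F$, is at most $(\Gamma_k-\delta_k) L\|x_k-x^*\|^2$.

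Putting these pieces together, the coefficient of $\|x_k-x^*\|^2$ on the right-hand side becomes
\[
1+L^2\Gamma_k^2-2\delta_k\eta+2(\Gamma_k-\delta_k)L \;=\; 1-2(\eta+L)\delta_k+2L\Gamma_k+L^2\Gamma_k^2,
\]
which together with the $\Gamma_k^2\EXP{\|w_k\|^2\mid\sF_k}$ term recovers exactly \eqref{ineq}. The only delicate part is the $\delta_k/\Gamma_k$-splitting in the monotonicity term; the remaining estimates are routine.
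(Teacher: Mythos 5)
Your proposal is correct and follows essentially the same route as the paper's proof: per-player nonexpansivity of the projection, vanishing of the noise cross terms under conditioning on $\sF_k$, the bounds $\Gamma_k^2 L^2\|x_k-x^*\|^2$ and $\Gamma_k^2\EXP{\|w_k\|^2\mid\sF_k}$ for the quadratic terms, and the key splitting $\g_{k,i}=\delta_k+(\g_{k,i}-\delta_k)$ of the monotonicity cross term followed by Cauchy--Schwarz and the Lipschitz bound. No gaps; your handling of the sign condition $0\le\g_{k,i}-\delta_k\le\Gamma_k-\delta_k$ is in fact slightly more careful than the paper's write-up.
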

\begin{proof}
By  Assumption \ref{assum:different_main}a, 
\an{the set $X$ is closed and convex}. Since $F$ is strongly monotone, the existence and uniqueness of the solution to VI$(X,F)$ is guaranteed by Theorem 2.3.3 of~\cite{facchinei02finite}.
Let $x^*$ denote the solution of VI$(X,F)$. From properties of projection operator, we know that a vector $x^*$ solves VI$(X,F)$ problem if and only if $x^*$ satisfies
\[x^*=\Pi_X(x^*-\g F(x^*))\qquad\hbox{for any }\g>0.\]
\fy{From} algorithm \eqref{eqn:algorithm_different} and the non-expansiveness property of the
projection operator, we have for all $k\ge0$ and~$i$,
\begin{align*}
 &\|x_{k+1,i}-x_i^*\|^2  =\|\Pi_{X_i}(x_{k,i}-\g_{k,i}(F_i(x_k)+w_{k,i})) \\
 & -\Pi_{X_i}(x_i^*-\g_{k,i} F_i(x^*))\|^2 \\
 & \le \|x_{k,i}-x_i^*-\g_{k,i}(F_i(x_k)+w_{k,i}-F_i(x^*))\|^2.
\end{align*}	
Taking the expectation conditioned on the past, and using
$\EXP{w_{k,i}\mid \sF_k}=0$, we have
\begin{align*}
& \EXP{\|x_{k+1,i}-x_i^*\|^2\mid\sF_k} \le \|x_{k,i}-x_i^*\|^2 \cr
&+\gamma_{k,i}^2\|F_i(x_k)-F_i(x^*)\|^2  +\g_{k,i}^2\EXP{\|w_{k,i}\|^2\mid \sF_k}\cr 
&-2\gamma_{k,i} (x_{k,i}-x_i^*)^T(F_i(x_k)-F_i(x^*)).
\end{align*}
Now, by summing the preceding relations over $i$,  we have
\begin{align}\label{ineq:main1}
& \EXP{\|x_{k+1}-x^*\|^2\mid\sF_k} \le \|x_{k}-x^*\|^2 \cr
&+\underbrace{\sum_{i=1}^N\gamma_{k,i}^2\|F_i(x_k)-F_i(x^*)\|^2}_{\bf Term\,1} +\sum_{i=1}^N\g_{k,i}^2
\EXP{\|w_{k,i}\|^2\mid \sF_k}\cr 
&\underbrace{-2\sum_{i=1}^N\gamma_{k,i} (x_{k,i}-x_i^*)^T(F_i(x_k)-F_i(x^*))}_{\bf Term\,2}.
\end{align}
Next, we estimate Term $1$ and Term $2$ in (\ref{ineq:main1}).
By using the definition of $\Gamma_k$ and by leveraging the Lipschitzian
	property of mapping $F$, we obtain
\begin{align}\label{ineq:main2}
\hbox{Term}\,1 \le  \Gamma_k^2\|F(x_k)-F(x^*)\|^2\le\Gamma_k^2L^2\|x_k-x^*\|^2. 
\end{align}
Adding and subtracting
$-2\sum_{i=1}^N\delta_k(x_{k,i}-x_i^*)^T(F_i(x_k)-F_i(x^*))$ {from} Term $2$, we further obtain
\begin{equation*}
\begin{split}
\hbox{Term}\,2  \le 
 & -2\delta_k(x_{k}-x^*)^T(F(x_k)-F(x^*))\cr
& -2\sum_{i=1}^N(\g_{k,i}-\delta_k)(x_{k,i}-x_i^*)^T{(F_i(x_{k})-F_i(x^*))}.
\end{split}
\end{equation*} 
By the Cauchy-Schwartz inequality, \fy{we obtain}
\begin{equation*}
\begin{split}
\hbox{Term}\,2  \le & -2\delta_k(x_{k}-x^*)^T(F(x_k)-F(x^*))\cr
& +2(\g_{k,i}-\delta_k)\sum_{i=1}^N\|x_{k,i}-x_i^*\|\|{F_i(x_{k})-F_i(x^*)}\|\cr
 \le & -2\delta_k(x_{k}-x^*)^T(F(x_k)-F(x^*))\cr
& +2(\Gamma_k-\delta_k)\|x_{k}-x^*\|\|F(x_{k})-F(x^*)\|,
\end{split}
\end{equation*}
where in the last relation, we use {H\"{o}lder's} inequality.
Invoking the strong monotonicity of the mapping for bounding the first term and by
{utilizing the Lipschitzian property of} the second term of the preceding relation, we have
\begin{equation*}
\begin{split}
\hbox{Term}\,2  \le -2\eta\delta_k\|x_{k}-x^*\|^2 +2(\Gamma_k-\delta_k)L\|x_{k}-x^*\|^2.
\end{split}
\end{equation*}
The desired inequality is obtained by combining relations
(\ref{ineq:main1}) and (\ref{ineq:main2}) with the preceding inequality .
\end{proof}

We next prove that algorithm
\eqref{eqn:algorithm_different} generates a sequence of iterates that
converges \fy{a.s.} to the unique solution of VI$(X,F)$,  as seen in the following proposition.
Our proof of this result makes use of Lemma~\ref{lemma:error_bound_1}.

\begin{proposition}[Almost-sure convergence]
\label{prop:rel_bound}
Consider the algorithm \ref{eqn:algorithm_different}. Let Assumption \ref{assum:different_main}, \ref{assum:step_error_sub} and \ref{assum:w_k_bound} hold. Then,
\begin{enumerate}
\item [(a)] The following relation holds \fy{a.s.} for all $k \ge 0$:
\begin{align*}
&\EXP{\|x_{k+1}-x^*\|^2} \leq (1+\beta)^2\delta_k^2\nu^2\cr
&  +(1-2(\eta-\beta L)\delta_k+(1+\beta)^2L^2\delta_k^2)\EXP{\|x_	k-x^*\|^2}.
\end{align*} 
\item [(b)] The sequence $\{x_k\}$
generated by algorithm (\ref{eqn:algorithm_different}), converges \fy{a.s.}
to the unique solution of VI$(X,F)$.
%\item [(c)] Moreover, $\lim_{k\to\infty}\EXP{\|x_k-x^*\|^2}= 0$, and for all $k \ge 0$
%and for any $\epsilon >0$, 
%\begin{align*}
%& \hbox{Prob}\left(\|x_j-x^*\|^2 \leq \e \hbox{ for all } j \geq k\right) 
%\geq \cr 
%& 1- \frac{1}{\e}\,
%\left(\EXP{\|x_k-x^*\|^2}+(1+\beta)^2\nu^2\sum_{j=k}^\infty \delta_j^2\right).
%\end{align*}
\end{enumerate}
\end{proposition}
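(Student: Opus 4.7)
The plan is to derive part (a) as a direct calculation from Lemma \ref{lemma:error_bound_1} combined with the stepsize bound $\Gamma_k\le(1+\beta)\delta_k$ and the noise bound $\nu^2$, and then to obtain part (b) by applying the Robbins--Siegmund--type Lemma \ref{lemma:probabilistic_bound_polyak} to the conditional analogue of the inequality in (a).

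For part (a), the ingredients are already in hand. From Assumption \ref{assum:w_k_bound}, $\EXP{\|w_k\|^2\mid\sF_k}\le\nu^2$, and from Assumption \ref{assum:step_error_sub}(b), $\Gamma_k\le(1+\beta)\delta_k$, so the noise term in Lemma \ref{lemma:error_bound_1} is bounded by $(1+\beta)^2\delta_k^2\nu^2$. The key algebraic manipulation for the coefficient of $\|x_k-x^*\|^2$ is the rewriting
\[
-2(\eta+L)\delta_k + 2L\Gamma_k = -2\eta\delta_k + 2L(\Gamma_k-\delta_k) \le -2(\eta-\beta L)\delta_k,
\]
together with $L^2\Gamma_k^2 \le (1+\beta)^2 L^2 \delta_k^2$. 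Substituting these and taking total expectation yields part (a); performing the same manipulation inside the conditional expectation gives the corresponding inequality with $\EXP{\cdot\mid\sF_k}$ on the left-hand side and $\|x_k-x^*\|^2$ (rather than $\EXP{\|x_k-x^*\|^2}$) on the right, which I will need for part (b).

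For part (b), I would apply Lemma \ref{lemma:probabilistic_bound_polyak} with $v_k=\|x_k-x^*\|^2$, $\alpha_k=2(\eta-\beta L)\delta_k-(1+\beta)^2L^2\delta_k^2$, and $\mu_k=(1+\beta)^2\nu^2\delta_k^2$. The constraint $\beta<\eta/L$ in Assumption \ref{assum:step_error_sub}(b) ensures $\eta-\beta L>0$. The two-sided envelope $\gamma_{k,i}/(1+\beta)\le\delta_k\le\gamma_{k,i}$, which follows from $\delta_k\le\min_i\gamma_{k,i}$ together with $\gamma_{k,i}\le\Gamma_k\le(1+\beta)\delta_k$, combined with Assumption \ref{assum:step_error_sub}(a) delivers $\sum_k\delta_k=\infty$, $\sum_k\delta_k^2<\infty$, and in particular $\delta_k\to 0$. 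Consequently, for all sufficiently large $k$, $\alpha_k\in(0,1]$, $\sum_k\alpha_k=\infty$, $\sum_k\mu_k<\infty$, and $\mu_k/\alpha_k=O(\delta_k)\to 0$. Invoking Lemma \ref{lemma:probabilistic_bound_polyak} then yields $\|x_k-x^*\|^2\to 0$ almost surely, which is part (b).

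The anticipated obstacles are essentially bookkeeping: converting the per-player assumptions on $\gamma_{k,i}$ into scalar envelope properties of $\delta_k$, and handling the finitely many initial indices at which $\alpha_k$ may fail to lie in $[0,1]$ (which does not affect the asymptotic conclusion). The substantive content is already packaged in the strong-monotonicity-plus-Lipschitz contraction supplied by Lemma \ref{lemma:error_bound_1} and the supermartingale convergence delivered by Lemma \ref{lemma:probabilistic_bound_polyak}.
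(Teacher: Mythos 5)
Your proposal is correct and follows essentially the same route as the paper: part (a) by substituting $\Gamma_k\le(1+\beta)\delta_k$ and the noise bound $\nu^2$ into Lemma~\ref{lemma:error_bound_1}, and part (b) by applying Lemma~\ref{lemma:probabilistic_bound_polyak} with the same choices of $v_k$, $\alpha_k$, and $\mu_k$. If anything, you are slightly more careful than the paper in two places: you take $\mu_k=(1+\beta)^2\nu^2\delta_k^2$ (deterministic, as the lemma formally requires) rather than the random quantity $(1+\beta)^2\delta_k^2\EXP{\|w_k\|^2\mid\sF_k}$, and you explicitly verify $\sum_k\delta_k=\infty$ via the envelope $\gamma_{k,i}/(1+\beta)\le\delta_k\le\gamma_{k,i}$, a condition the paper uses implicitly.
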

\begin{proof}
(a) \ Assumption \ref{assum:step_error_sub}b implies that $\Gamma_k \leq
(1+\beta)\delta_k$. Combining this with inequality \eqref{ineq}, we obtain
\begin{align*}
&\EXP{\|x_{k+1}-x^*\|^2\mid\sF_k} \cr
&\leq  (1-2(\eta-\beta L)\delta_k+(1+\beta)^2L^2\delta_k^2)\|x_	k-x^*\|^2\cr 
&+(1+\beta)^2\delta_k^2\EXP{\|w_k\|^2\mid\sF_k}, \qquad \hbox{for all } k \ge 0.
\end{align*}  
Taking expectations in the preceding inequality and using Assumption
\ref{assum:w_k_bound}, we obtain the desired relation.

\noindent (b) \ We {show that the conditions of} Lemma
\ref{lemma:probabilistic_bound_polyak} {are satisfied in order} to
claim almost sure convergence of $x_k$ to $x^*$. Let us define $v_k
\triangleq \|x_{k+1}-x^*\|^2$, $\alpha_k \triangleq 2(\eta-\beta
		L)\delta_k-L^2\delta_k^2(1+\beta)^2$, and $\mu_k \triangleq
(1+\beta)^2\delta_k^2\EXP{\|w_k\|^2\mid\sF_k}$. Since $\g_{k,i}$
tends to zero for any $i=1,\ldots,N$, we {may} conclude that $\delta_k$ goes to zero as $k$ grows. 
\an{Recall that} $\alpha_k$ is given by 
\[\alpha_k =2(\eta-\beta L)\delta_k\left(1-\frac{(1+\beta)^2L^2\delta_k}{2(\eta-\beta
						L)}\right).\]
\an{Due to $\delta_k\to0$, for all} $k$ large enough, say $k>k_1$, we have
\[1-\frac{(1+\beta)^2L^2\delta_k}{2(\eta-\beta L)}>0.\]
Since $\beta < \frac{\eta}{L}$ (Assumption \ref{assum:step_error_sub}b), 
it follows $\eta-\beta L>0$. Thus,  we have $\alpha_k \geq 0$. Also, for $k$ large enough, say $k>k_2$, we have $\alpha_k \leq 1$. Therefore, when $k>\max\{k_1,k_2\}$ we have $0 \leq \alpha_k \leq 1$. Obviously, $v_k, \mu_k \geq 0$. From Assumption~\ref{assum:step_error_sub}a and 
Assumption~\ref{assum:w_k_bound} it follows  $\sum_{k}\mu_k < \infty$. We also have 
\begin{align*}%\lim_{k \rightarrow \infty}
{\lim_{k \rightarrow	\infty}} \frac{\mu_k}{\alpha_k}&={\lim_{k \rightarrow	\infty}} \frac{(1+\beta)^2\delta_k^2\EXP{\|w_k\|^2\mid\sF_k}}{2(\eta-\beta L)\delta_k\left(1-\frac{(1+\beta)^2L^2\delta_k}{2(\eta-\beta L)}\right)}\cr &= {\lim_{k \rightarrow	\infty} }\frac{(1+\beta)^2\delta_k\EXP{\|w_k\|^2\mid\sF_k}}{2(\eta-\beta L)}.\end{align*} 
Since the term $\EXP{\|w_k\|^2\mid\sF_k}$ is bounded by $\nu^2$ 
(Assumption~\ref{assum:w_k_bound}) and
\an{$\delta_k\to0$, we see} that $\lim_{k \rightarrow
	\infty}\frac{\mu_k}{\alpha_k}=0$. Hence, the conditions of Lemma \ref{lemma:probabilistic_bound_polyak} are satisfied, which implies that $x_k$ converges to 
the unique solution, $x^*$, almost surely. 
\end{proof} 

Consider now a special form of algorithm~\eqref{eqn:algorithm_different} corresponding to
the case when all players employ the same stepsize, i.e., $\gamma_{k,i}=\gamma_k$ for all $k$.
Then, the algorithm~\eqref{eqn:algorithm_different} reduces to the following:
\begin{align}
\begin{aligned}
 x_{k+1} & =\Pi_{X}\left(x_k-\g_k( F(x_k)+w_k)\right),\cr 
 w_k & \triangleq \hat F(x_k,\xi_k)-F(x_k),
\end{aligned}\label{eqn:algorithm_identical}
\end{align}
for all $k \ge 0$. 
Observe that when $\gamma_{k,i}=\gamma_k$ for all $k$, Assumption~\ref{assum:step_error_sub}a
is satisfied when 
$\sum_{k=0}^\infty \gamma_k = \infty$ and $\sum_{k=0}^\infty\gamma_k^2 < \infty$.
Assumption~\ref{assum:step_error_sub}b is automatically satisfied 
with $\Gamma_k=\delta_k=\gamma_k$ and $\beta=0$. 
Hence, as a direct consequence of Proposition \ref{prop:rel_bound}, we have the following corollary.

%Consider the following assumption.
%\begin{assumption}\label{assum:step_error_identical}
 %The stepsize is such that $\gamma_k>0$ for all $k$. Furthermore,
%$\sum_{k=0}^\infty \gamma_k = \infty$ and $\sum_{k=0}^\infty\gamma_k^2 < \infty$.
%\end{assumption}
%The following corollary shows the almost-sure convergence result when players have identical stepsizes. 
\begin{corollary}[Identical stepsizes]
\label{corollary:rel_bound}
Consider algorithm (\ref{eqn:algorithm_identical}). Let Assumption \ref{assum:different_main} and
\ref{assum:w_k_bound} hold. Also, let
$\sum_{k=0}^\infty \gamma_k = \infty$ and $\sum_{k=0}^\infty\gamma_k^2 < \infty$.
Then,
\begin{enumerate}
\item [(a)] The following relation holds almost surely:
{\small 
\begin{align*}
\EXP{\|x_{k+1}-x^*\|^2} & \leq 
 (1-2	\eta\g_k+L^2\g_k^2 ) 
\EXP{\|x_{k}-x^*\|^2} \cr 
&+\g_k^2 \nu^2.
\end{align*}}
\item [(b)] The sequence $\{x_k\}$
generated by algorithm (\ref{eqn:algorithm_identical}), converges \fy{a.s.} to the unique solution of VI$(X,F)$.
%\item [(c)] Moreover, $\lim_{k\to\infty}\EXP{\|x_k-x^*\|^2}= 0$, and for all $k \ge 0$
%and for any $\epsilon >0$, 
%\begin{align*}
%& \hbox{Prob}\left(\|x_j-x^*\|^2 \leq \e \hbox{ for all } j \geq k\right) 
%\geq \cr 
%& 1- \frac{1}{\e}\,
%\left(\EXP{\|x_k-x^*\|^2}+\nu^2\sum_{j=k}^\infty \gamma_j^2\right).
%\end{align*}
\end{enumerate}
\end{corollary}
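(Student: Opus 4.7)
The plan is to observe that this corollary is essentially a specialization of Proposition~\ref{prop:rel_bound} to the case of identical stepsizes, so the work reduces to verifying the hypotheses of that proposition under the weaker assumptions listed here, and then simply reading off the conclusions. The main observation is that when $\gamma_{k,i} = \gamma_k$ for every player $i$, we may set $\Gamma_k = \delta_k = \gamma_k$, in which case $\frac{\Gamma_k - \delta_k}{\delta_k} = 0$, so Assumption~\ref{assum:step_error_sub}b is automatically satisfied with the choice $\beta = 0$ (noting that $0 < \eta/L$). Likewise, Assumption~\ref{assum:step_error_sub}a reduces exactly to the two summability conditions $\sum_k \gamma_k = \infty$ and $\sum_k \gamma_k^2 < \infty$ hypothesized in the corollary.

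For part (a), I would then substitute $\beta = 0$ and $\delta_k = \gamma_k$ directly into the inequality of Proposition~\ref{prop:rel_bound}(a). The factor $(1+\beta)^2 \delta_k^2 \nu^2$ becomes $\gamma_k^2 \nu^2$, and the coefficient multiplying $\EXP{\|x_k - x^*\|^2}$ simplifies from $1 - 2(\eta - \beta L)\delta_k + (1+\beta)^2 L^2 \delta_k^2$ to $1 - 2\eta \gamma_k + L^2 \gamma_k^2$, which is precisely the bound claimed in the corollary.

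For part (b), since all hypotheses of Proposition~\ref{prop:rel_bound} are met (Assumption~\ref{assum:different_main}, Assumption~\ref{assum:step_error_sub}, and Assumption~\ref{assum:w_k_bound}), the almost-sure convergence of $\{x_k\}$ to the unique solution $x^*$ of VI$(X,F)$ follows immediately from Proposition~\ref{prop:rel_bound}(b).

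There is no real obstacle here, since the statement is essentially a bookkeeping exercise to highlight the special case $\beta = 0$. The only thing to be careful about is confirming that the strict inequality $\beta < \eta/L$ in Assumption~\ref{assum:step_error_sub}b is respected, which is immediate because $0 < \eta/L$ when $\eta, L > 0$ (as guaranteed by Assumption~\ref{assum:different_main}b). I would therefore keep the proof to a short paragraph that simply verifies these reductions and invokes Proposition~\ref{prop:rel_bound} for both parts.
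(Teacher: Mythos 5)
Your proposal is correct and matches the paper's own treatment exactly: the paper also obtains this corollary as a direct consequence of Proposition~\ref{prop:rel_bound} by noting that with $\gamma_{k,i}=\gamma_k$ one may take $\Gamma_k=\delta_k=\gamma_k$ and $\beta=0$, so that Assumption~\ref{assum:step_error_sub} reduces to the stated summability conditions. Substituting $\beta=0$ and $\delta_k=\gamma_k$ into parts (a) and (b) of that proposition gives precisely the claimed bound and the almost-sure convergence.
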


\section{A distributed adaptive steplength SA scheme}\label{sec:recursive scheme}
Stochastic approximation algorithms %, first introduced by Robbins and Monro~\cite{robbins51sa}, 
require stepsize sequences to be square summable
but not summable. These algorithms provide little advice regarding the choice of
such sequences. One of the most common choices has been the harmonic
steplength rule which takes the form of $\gamma_k=\frac{\theta}{k}$ where $\theta>0$ is a
constant. Although, this choice guarantees almost-sure convergence, it does
not leverage problem parameters. Numerically, it has been observed that
such choices can perform quite poorly in practice. Motivated by this
shortcoming, we present a
distributed adaptive steplength scheme for algorithm
(\ref{eqn:algorithm_different}) which guarantees almost-sure
convergence of $x_k$ to the unique solution of VI$(X,F)$. It is derived
from the minimizer of a suitably defined error bound and leads to a
recursive relation; more specifically, at each step, the
new stepsize is calculated using the stepsize from the preceding iteration and
problem parameters. To begin our analysis, we consider the
result of  Proposition \ref{prop:rel_bound}a for all $k \geq 0$:
\begin{align}\label{equ:bound_recursive_01}
&\EXP{\|x_{k+1}-x^*\|^2} \leq (1+\beta)^2\delta_k^2\nu^2\cr
&+(1-2(\eta-\beta L)\delta_k+(1+\beta)^2L^2\delta_k^2)\EXP{\|x_	k-x^*\|^2}.\quad
\end{align} 
When the stepsizes are further restricted so that \[0 < \delta_k
\le\frac{\eta-\beta L}{(1+\beta)^2L^2},\] we have
\begin{align*}
1-2(\eta-\beta L)\delta_k  +L^2(1+\beta)^2\delta_k^2  \leq 1-(\eta -\beta L) \delta_k.
\end{align*}
Thus, for  $0 < \delta_k \le\frac{\eta-\beta L}{(1+\beta)^2L^2}$, from inequality (\ref{equ:bound_recursive_01}) we obtain
{\small \begin{align}\label{rate_nu_est_different}
\EXP{\|x_{k+1}-x^*\|^2} 
& \le (1-(\eta-\beta L) \delta_k)\EXP{\|x_k-x^*\|^2}\cr
& + (1+\beta)^2\delta_k^2\nu^2\qquad\hbox{for all }k \geq 0.\quad
\end{align}} \hskip -0.3pc
Let us view the quantity
$\EXP{\|x_{k+1}-x^*\|^2}$ as an error $e_{k+1}$ of the method arising
from the use of the stepsize values $\delta_0,\delta_1,\ldots, \delta_k	$. Relation~\eqref{rate_nu_est_different} gives us an estimate of the error of algorithm (\ref{eqn:algorithm_different}). We use this estimate to develop an adaptive stepsize
procedure.  Consider the worst case which is the case when ~\eqref{rate_nu_est_different} holds with equality.  In this worst case,
	 the error satisfies the following recursive relation:
\begin{align*}
& e_{k+1} = (1-(\eta-\beta L) \delta_k) e_k+(1+\beta^2)\delta_k^2 \nu^2.
\end{align*}
Let us assume that we want to run the algorithm
(\ref{eqn:algorithm_different}) for a fixed number of iterations, say
{$K$}. The preceding relation shows that $e_{K}$ depends on the stepsize
values up to the $K$th iteration. This motivates us to see the stepsize
parameters as decision variables that can minimize \fy{a suitably defined} error bound of the
algorithm. Thus, the variables are {$\delta_0, \delta_1, \ldots,
	\delta_{K-1}$} and the objective function is the error function
	$e_K(\delta_0, \delta_1, \ldots, \delta_{K-1})$. We proceed to derive a stepsize rule by minimizing the error $e_{K+1}$;
	Importantly, $\delta_{K+1}$ can be shown to be a function of only the most
	recent stepsize $\delta_{K}$. We define the real-valued
	error function $e_k(\delta_0, \delta_1, \ldots, \delta_{k-1})$ by
	the upper bound in \eqref{rate_nu_est_different}: 
\begin{align}\label{def:e_k_different}
 e_{k+1}(\delta_0,\ldots,\delta_k) \triangleq &(1-(\eta-\beta L) \delta_k) e_k(\delta_0,\ldots,\delta_{k-1})\cr 
& +(1+\beta^2)\delta_k^2 \nu^2\qquad\hbox{for all }k \geq 0,
\end{align}
where $e_0$ is a positive scalar, $\eta$ is the strong monotonicity parameter 
and $\nu^2$ is the upper bound for the second moments of the error 
norms $\|w_k\|$.

Now, let us consider the stepsize sequence $\{\delta^*_k\}$ given by 
\begin{align}
& \delta_0^*=\frac{\eta-\beta L}{2(1+\beta)^2\nu^2}\,e_0\label{eqn:gmin0}
\\
& \delta_k^*=\delta_{k-1}^*\left(1-\frac{\eta-\beta L}{2}\delta_{k-1}^*\right) 
\quad \hbox{for all }k\ge 1.\label{eqn:gmink}
\end{align}
In what follows, we often abbreviate $e_k(\delta_0,\ldots,\delta_{k-1})$ by $e_k$
whenever this is unambiguous. The next proposition shows that the
lower bound sequence of $\gamma_{k,i}$ given by
(\ref{eqn:gmin0})--(\ref{eqn:gmink}) minimizes the errors $e_k$ over $(0,\frac{\eta -\beta L}{(1+\beta)^2L^2}]^{k}$.
\begin{proposition}\label{prop:rec_results}
Let $e_k(\delta_0,\ldots,\delta_{k-1})$ be defined as in~\eqref{def:e_k_different},
where $e_0>0$ is such that $\,e_0<\frac{2\nu^2}{L^2}$,
and $L$ is the Lipschitz constant of mapping $F$.
Let the sequence $\{\delta^*_k\}$ be given by 
\eqref{eqn:gmin0}--\eqref{eqn:gmink}.
Then, the following hold:
\begin{itemize}
\item [(a)] \fy{$e_k(\delta_0^*,\ldots,\delta_k^*) = \frac{2(1+\beta)^2\nu^2}{\eta-\beta L}\,\delta_k^*$ for all $k \geq 0$.}
\item [(b)] For any $k\ge 1$, the vector $(\delta_0^*, \delta_1^*,\ldots,\delta_{k-1}^*)$ is the minimizer of the function $e_k(\delta_0,\ldots,\delta_{k-1})$ over the set
$$\mathbb{G}_k 
\triangleq \left\{\alpha \in \Real^k : 0< \alpha_j \leq \frac{\eta-\beta L}{(1+\beta)^2L^2}, j =1,\ldots, k\right\},$$
\fy{i.e., for any $k\ge 1$ and $(\delta_0,\ldots,\delta_{k-1})\in \mathbb{G}_k$:}
	\begin{align*}
	& e_{k}(\delta_0,\ldots,\delta_{k-1}) 
         - e_{k}(\delta_0^*,\ldots,\delta_{k-1}^*)\cr  
          &\geq (1+\beta)^2\nu^2(\delta_{k-1}-\delta_{k-1}^*)^2.
	\end{align*}	
	\begin{comment}
\item[(c)] 
%\todo{Question: Isn't this part a consequence of part (b)?}
The vector $(\delta^*_{0},\ldots,\delta_{k-1}^*)$ 
is a stationary point of function $e_k(\delta_0,
			\delta_1,\ldots,\delta_{k-1})$ over the set 
$\mathbb{G}_k	$.\end{comment}	
\end{itemize}
\end{proposition}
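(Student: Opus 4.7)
The plan is to prove both parts by induction, with part (a) supplying the algebraic identity that collapses the quadratic in $\delta_k$ appearing in part (b). To streamline the calculation, I would abbreviate $c \triangleq \eta - \beta L > 0$ and $d \triangleq (1+\beta)^{2}\nu^{2} > 0$, so the recurrence becomes $e_{k+1}^{*} = (1-c\delta_{k}^{*})e_{k}^{*} + d(\delta_{k}^{*})^{2}$ with $\delta_{0}^{*} = ce_{0}/(2d)$ and $\delta_{k+1}^{*} = \delta_{k}^{*}\bigl(1 - \tfrac{c}{2}\delta_{k}^{*}\bigr)$. Before starting, I would also verify that the prescribed sequence stays in the feasible set: the assumption $e_0 < 2\nu^{2}/L^{2}$ immediately yields $\delta_{0}^{*}\le c/((1+\beta)^{2}L^{2})$, and because $1 - \tfrac{c}{2}\delta_{k-1}^{*}\in(0,1]$ (using $c\le L\le(1+\beta)L$, which holds since strong monotonicity plus Lipschitz continuity force $\eta\le L$), the sequence $\{\delta_{k}^{*}\}$ is monotonically decreasing and stays in $(0,c/((1+\beta)^{2}L^{2})]$.

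For part (a) the target identity is $e_{k}^{*} = \tfrac{2d}{c}\delta_{k}^{*}$ (the written expression $e_{k}(\delta_{0}^{*},\dots,\delta_{k}^{*})$ contains one index shift past the natural arity of $e_{k}$; the intended statement matches the base case $e_{0}=\tfrac{2d}{c}\delta_{0}^{*}$, which is the very definition of $\delta_{0}^{*}$). For the inductive step, substituting the inductive hypothesis directly into the recurrence gives
\begin{equation*}
e_{k+1}^{*} = (1-c\delta_{k}^{*})\tfrac{2d}{c}\delta_{k}^{*} + d(\delta_{k}^{*})^{2} = \tfrac{2d}{c}\delta_{k}^{*} - d(\delta_{k}^{*})^{2} = \tfrac{2d}{c}\delta_{k}^{*}\bigl(1 - \tfrac{c}{2}\delta_{k}^{*}\bigr) = \tfrac{2d}{c}\delta_{k+1}^{*},
\end{equation*}
closing the induction in one line.

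For part (b) I would induct on $k$. The base case $k=1$ follows from completing the square in the quadratic $e_{1}(\delta_{0}) = (1-c\delta_{0})e_{0}+d\delta_{0}^{2}$, which evaluates to $e_{1}^{*} + d(\delta_{0}-\delta_{0}^{*})^{2}$ exactly. For the step, let $(\delta_{0},\dots,\delta_{k})\in\mathbb{G}_{k+1}$ and apply the defining recurrence; since $\delta_{k}\le c/((1+\beta)^{2}L^{2})$ forces $1-c\delta_{k}\ge 0$, the inductive hypothesis yields
\begin{equation*}
e_{k+1}(\delta_{0},\dots,\delta_{k}) \;\ge\; (1-c\delta_{k})\bigl[e_{k}^{*} + d(\delta_{k-1}-\delta_{k-1}^{*})^{2}\bigr] + d\delta_{k}^{2}.
\end{equation*}
The terms not involving $\delta_{k-1}$ form a scalar quadratic $\phi(\delta_{k}) = (1-c\delta_{k})e_{k}^{*} + d\delta_{k}^{2}$; substituting $e_{k}^{*} = \tfrac{2d}{c}\delta_{k}^{*}$ from part (a) and completing the square gives $\phi(\delta_{k}) = d(\delta_{k}-\delta_{k}^{*})^{2} + \tfrac{2d}{c}\delta_{k}^{*}\bigl(1-\tfrac{c}{2}\delta_{k}^{*}\bigr) = d(\delta_{k}-\delta_{k}^{*})^{2} + e_{k+1}^{*}$. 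Discarding the non-negative residual $(1-c\delta_{k})d(\delta_{k-1}-\delta_{k-1}^{*})^{2}$ produces the claimed bound, and in particular the minimizer is attained at $(\delta_{0}^{*},\dots,\delta_{k}^{*})$. The only genuine obstacle is recognizing that part (a) is what makes the quadratic in $\delta_{k}$ separate cleanly from the history — once that algebraic identity is in hand, everything else is essentially bookkeeping with non-negative coefficients.
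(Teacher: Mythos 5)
Your proposal is correct and follows essentially the same route as the paper's own proof: induction for part (a) to establish $e_k^*=\frac{2(1+\beta)^2\nu^2}{\eta-\beta L}\delta_k^*$, verification that $\{\delta_k^*\}$ stays in $\mathbb{G}_k$, and an induction for part (b) that uses the part (a) identity together with $0<1-(\eta-\beta L)\delta_k$ to complete the square in $\delta_k$ and recover the residual $(1+\beta)^2\nu^2(\delta_k-\delta_k^*)^2$. The only cosmetic differences are your abbreviations $c,d$, your correct observation of the index-shift typo in the statement of part (a), and your use of the slightly stronger inductive hypothesis (retaining the quadratic residual) where the paper keeps only $e_k\ge e_k^*$; both suffice since the extra term is discarded anyway.
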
 

\begin{proof}%\fy{The proof can be done in a similar fashion to the proof of Proposition $2$ in \cite{Farzad1}.}

(a) \ To show the result, we use induction on $k$. 
Trivially, it holds for 
$k=0$ from \eqref{eqn:gmin0}. Now, suppose that we have
$e_k(\delta_0^*,\ldots,\delta_{k-1}^*) = \frac{2(1+\beta)^2\nu^2}{\eta-\beta L}\,\delta_k^*$ for some $k$,
and consider the case for $k+1$. From the definition of the error $e_k$ 
in~\eqref{def:e_k_different} \fy{and the inductive hypothesis, we have
\begin{align*}
 e_{k+1}(\delta_0^*,\ldots,\delta_k^*)  
 %&=(1-(\eta-\beta L)\delta_k^*)e_{k}(\delta_0^*,\ldots,\delta_{k-1}^*)\cr
%&+(1+\beta)^2{(\delta_k^*)^2}\nu^2\cr 
&=(1-(\eta-\beta L)\delta_k^*)\frac{2(1+\beta)^2\nu^2}{\eta-\beta L}\,\delta_k^*\cr
&+(1+\beta)^2(\delta_k^*)^2\nu^2\cr
&=\frac{2(1+\beta)^2\nu^2}{\eta-\beta L}\,\delta_k^*\left(1-\frac{\eta-\beta L}{2}\,\delta_k^*\right) \cr
&=\frac{2(1+\beta)^2\nu^2}{\eta-\beta L}\,\delta_{k+1}^*,\end{align*}}

where the last equality follows by the definition of $\delta_{k+1}^*$ in
\eqref{eqn:gmink}. Hence, the result holds for all $k \ge 0$.

\noindent (b) \ {First we need to show that
	$(\delta^*_{0},\ldots,\delta_{k-1}^*)\in\mathbb{G}_k$. By the choice
		of $e_0$, i.e. $e_0 < \frac{2\nu^2}{L^2}$, we have that $0<\delta_0^*\leq \frac{\eta- \beta L}{(1+\beta)^2 L^2}$.
Using induction, from relations (\ref{eqn:gmin0})--(\ref{eqn:gmink}), it can be shown that $0< \delta_{k}^*<\delta^*_{k-1}$ for all $k\ge1$. 
Thus, $(\delta^*_{0},\ldots,\delta_{k-1}^*)\in\mathbb{G}_k$ for all $k\ge1$. }Using induction on $k$, we now show that vector $(\delta_0^*,\delta_1^*,\ldots,\delta_{k-1}^*)$  
minimizes the error $e_k$ for all $k\ge1$. From the definition of the error $e_1$ and the relation
\[e_1(\delta_0^*)=\frac{2(1+\beta)^2\nu^2}{\eta-\beta L}\,\delta_1^*\] shown in part (a), we have
\begin{align*}
e_1(\delta_0) -e_1(\delta_0^*)
 &=(1-(\eta-\beta L) \delta_0)e_0+(1+\beta)^2\nu^2 \delta_0^2\cr
&-\frac{2(1+\beta)^2\nu^2}{\eta-\beta L}\,\delta_1^*.
\end{align*}
Using $\delta_1^*=\delta_0^*\left(1-\frac{\eta-\beta L}{2}\,\delta_0^*\right)$, we obtain
\begin{align*}
e_1(\delta_0) -e_1(\delta_0^*)
 &=(1-(\eta-\beta L) \g_0)e_0+(1+\beta)^2\nu^2 \delta_0^2\cr
&-\frac{2(1+\beta)^2\nu^2}{\eta-\beta L}\,\delta_0^*+(1+\beta)^2\nu^2(\delta_0^*)^2.
\end{align*}
where the last equality follows from $e_0=\frac{2(1+\beta)^2\nu^2}{\eta-\beta L}\,\delta_0^*.$
Thus, we have
\begin{align*}
 e_1(\delta_0) -e_1(\delta_0^*) 
 & = (1+\beta)^2\nu^2\left(-2\delta_0\delta_0^*+\delta_0^2+(\delta_0^*)^2 \right)\cr
& =(1+\beta)^2\nu^2\left(\delta_0-\delta_0^*\right)^2,
\end{align*}
and the inductive hypothesis holds for $k = 1$. 
Now, suppose that $e_k(\delta_0,\ldots,\delta_{k-1}) 
\geq e_k(\delta_0^*,\ldots,\delta_{k-1}^*)$ holds for some $k$ and
any $(\delta_0,\ldots,\delta_{k-1}) \in\mathbb{G}_k$, and we \fy{need} to show that 
$e_{k+1}(\delta_0,\ldots,\delta_k) \geq e_{k+1}(\delta_0^*,\ldots,\delta_k^*)$ 
holds for all $(\delta_0,\ldots,\delta_k) \in\mathbb{G}_{k+1}$. 
To simplify the notation, 
we use $e_{k+1}^*$ to denote the error $e_{k+1}$ evaluated at 
$(\delta_{0}^*,\delta_1^*,\ldots,\delta_k^*)$, and $e_{k+1}$ when 
evaluating at an arbitrary vector 
$(\delta_0,\delta_1,\ldots,\delta_k)\in\mathbb{G}_{k+1}$.
Using (\ref{def:e_k_different})  and part (a), we have
\begin{align*}
e_{k+1}- e_{k+1}^*
& = (1-(\eta-\beta L) \delta_k)e_k
+(1+\beta)^2\nu^2\delta_k^2\cr
& -\frac{2(1+\beta)^2\nu^2}{\eta-\beta L} \delta_{k+1}^*.
\end{align*}
Under the inductive hypothesis, we have $e_k\ge e_k^*$. It can be shown easily that when $(\delta_0,\delta_1,\ldots,\delta_k)\in \mathbb{G}_k$, we have $0<1-(\eta-\beta L)\delta_k< 1$. 
Using this, the relation $e_k^*=\frac{2(1+\beta)^2\nu^2}{\eta-\beta L}\g_k^*$
of part (a), and the definition of $\delta_{k+1}^*$, we obtain
\begin{align*}
e_{k+1} - e_{k+1}^* 
  & \geq  (1-(\eta-\beta L) \delta_k)\frac{2(1+\beta)^2\nu^2}{\eta-\beta L}\delta_k^*\cr 
 &+(1+\beta)^2\nu^2\delta_k^2 \cr
&-\frac{2(1+\beta)^2\nu^2}{\eta-\beta L} \delta_k^*\left(1-\frac{\eta-\beta L}{2}\delta_k^*\right) 
%& = -2\nu^2\gamma_k^*\g_k+\nu^2\g_k^2+\nu^2{\g_k^*}^2 \cr
 \cr 
 &=(1+\beta)^2 \nu^2(\delta_k-\delta_k^*)^2.
\end{align*}
Hence, {$e_{k} - e_{k}^*\ge
(1+\beta)^2\nu^2(\delta_{k-1}-\delta_{k-1}^*)^2$ holds} for all $k\ge1$ and all 
$(\delta_0,\ldots,\delta_{k-1})\in\mathbb{G}_k$. 
\end{proof}

We have just provided an analysis in terms of the lower bound
sequence $\{\delta_{k}\}$. We can conduct a similar analysis for $\{\Gamma_{k}$\}
and obtain the
corresponding adaptive stepsize scheme using the following relation: 
\begin{align*}
&\EXP{\|x_{k+1}-x^*\|^2} \leq \Gamma_k^2\nu^2 \cr &  +(1-\frac{2(\eta+L)}{1+\beta}\Gamma_k+2L\Gamma_k
+L^2\Gamma_k^2)\EXP{\|x_	k-x^*\|^2}.
\end{align*}	
When $0 < \Gamma_k
\le\frac{\eta-\beta L}{(1+\beta)L^2}$, we have
\begin{align}\label{rate_nu_est_different_max}
\EXP{\|x_{k+1}-x^*\|^2} 
& \le (1-\frac{(\eta-\beta L)}{1+\beta} \Gamma_k)\EXP{\|x_k-x^*\|^2}\cr
&+ \Gamma_k^2\nu^2\qquad\hbox{for all }k \geq 0.
\end{align}
Using relation (\ref{rate_nu_est_different_max}) and following similar approach in Proposition \ref{prop:rec_results}, we obtain the sequence $\{\Gamma^*_{k}\}$ given by 
\begin{align}
& \Gamma_{0}^*=\frac{\eta-\beta L}{2(1+\beta)\nu^2}\,e_0\label{eqn:gmax0}
\\
& \Gamma_k^*=\Gamma_{k-1}^*\left(1-\frac{\eta-\beta L}{2(1+\beta)}\Gamma_{k-1}^*\right)\qquad \hbox{for all } k \ge 1. 
\label{eqn:gmaxk}
\end{align}
\fy{Note that the adaptive stepsize sequence given by (\ref{eqn:gmax0})--(\ref{eqn:gmaxk}) converges to zero and moreover, it is not summable but squared summable (cf. \cite{Farzad1}, Proposition $3$).}
In the following lemma, we derive a relation between two recursive sequences, 
which we use later to obtain our main recursive stepsize scheme. 
\begin{lemma}\label{lemma:two-rec}
Suppose that sequences 
$\{\lambda_k\}$ and $\{\g_k\}$ are given with the following recursive equations 
for all  $k\geq 0$,
\begin{align}
\lambda_{k+1}& =\lambda_{k}(1-\lambda_{k}),  \hbox{ and } \g_{k+1} & =\g_{k}(1-c\g_{k}), \notag
\end{align}
where $\lambda_0=c\g_0$, $0<\g_0<\frac{1}{c}$, and $c>0$.
Then for all $k \geq 0$,
\[\lambda_{k}=c\g_k.\]
\end{lemma}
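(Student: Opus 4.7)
The plan is a straightforward induction on $k$, exploiting the structural similarity between the two recursions. The base case $k=0$ is given directly by hypothesis, since $\lambda_0 = c\gamma_0$ is stated as an assumption.

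For the inductive step, I would assume $\lambda_k = c\gamma_k$ and then substitute this identity into the recursion for $\lambda_{k+1}$:
\begin{align*}
\lambda_{k+1} = \lambda_k(1-\lambda_k) = c\gamma_k(1 - c\gamma_k) = c\bigl(\gamma_k(1-c\gamma_k)\bigr) = c\gamma_{k+1},
\end{align*}
where the last equality invokes the defining recursion for $\gamma_{k+1}$. This closes the induction.

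The only subtlety worth flagging is that both recursions must remain well-behaved under the stated hypotheses: starting from $0 < \gamma_0 < 1/c$ one has $0 < \lambda_0 < 1$, and a short side induction shows $0 < \gamma_k < 1/c$ (equivalently $0 < \lambda_k < 1$) for every $k$, so the factors $(1-\lambda_k)$ and $(1-c\gamma_k)$ remain positive and the sequences stay in the regime where the identity $\lambda_k = c\gamma_k$ is meaningful. There is no real obstacle here; the lemma is essentially a statement about a conjugation of one logistic-type recursion into the other by the scaling $\lambda \leftrightarrow c\gamma$, and the proof fits in two lines once the base case is noted.
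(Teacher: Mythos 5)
Your proof is correct and follows essentially the same route as the paper: a direct induction on $k$ in which the base case is the hypothesis $\lambda_0 = c\gamma_0$ and the inductive step multiplies the $\gamma$-recursion by $c$ (equivalently, substitutes $c\gamma_k = \lambda_k$ into the $\lambda$-recursion) to obtain $c\gamma_{k+1} = \lambda_k(1-\lambda_k) = \lambda_{k+1}$. Your additional remark that $0<\gamma_k<1/c$ is preserved is a harmless extra observation that the paper omits; no further changes are needed.
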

\begin{proof}
We use induction on $k$. For $k=0$, the relation holds since $\lambda_0=c\g_0$. 
Suppose that for some $k \geq 0$ the relation holds. Then, we have 
\begin{align}
\g_{k+1}=\g_{k}(1-c\g_{k}) \quad
& \Rightarrow \quad c\g_{k+1}=c\g_{k}(1-c\g_{k})\cr
 & \Rightarrow \quad c\g_{k+1}=\lambda_{k}(1-\lambda_{k})\cr
& \Rightarrow \quad \g_{k+1}=\lambda_{k+1}.
\end{align}
Hence, the result holds for $k+1$ implying that the result holds for all $k \ge 0$.
\end{proof}

Next, we show a relation for the sequences $\{\delta_k^*\}$ and $\{\Gamma_k^*\}$.

\begin{lemma}\label{lemma:min_maxm_relation}
Suppose that sequences $\{\delta_k^*\}$ and $\{\Gamma_k^*\}$ 
are given by relations \eqref{eqn:gmin0}--\eqref{eqn:gmink} and \eqref{eqn:gmax0}--\eqref{eqn:gmaxk} and $e_0 < \frac{2 \nu^2}{L^2}$. Then for all $k \geq 0$, {$\Gamma_k^*=(1+\beta)\delta_k^*$.}
\end{lemma}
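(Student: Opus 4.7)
The plan is to prove the identity by a short induction on $k$, since both sequences are defined by elementary one-step recursions with compatible initial data.

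For the base case $k=0$, I would just plug the initial values into \eqref{eqn:gmin0} and \eqref{eqn:gmax0}: $\Gamma_0^* = \frac{\eta-\beta L}{2(1+\beta)\nu^2}\,e_0 = (1+\beta)\cdot \frac{\eta-\beta L}{2(1+\beta)^2\nu^2}\,e_0 = (1+\beta)\delta_0^*$, so the relation holds for $k=0$. For the inductive step, assume $\Gamma_k^* = (1+\beta)\delta_k^*$; substituting into the recursion \eqref{eqn:gmaxk} for $\Gamma_{k+1}^*$ gives
\[
\Gamma_{k+1}^* \;=\; (1+\beta)\delta_k^*\!\left(1-\tfrac{\eta-\beta L}{2(1+\beta)}\,(1+\beta)\delta_k^*\right) \;=\; (1+\beta)\delta_k^*\!\left(1-\tfrac{\eta-\beta L}{2}\delta_k^*\right) \;=\; (1+\beta)\delta_{k+1}^*,
\]
where the last equality uses \eqref{eqn:gmink}. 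This completes the induction.

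As an alternative packaging of the same argument, one can invoke Lemma \ref{lemma:two-rec} directly by setting $c := \frac{\eta-\beta L}{2}$, $\gamma_k := \delta_k^*$, and $\lambda_k := \frac{\eta-\beta L}{2(1+\beta)}\Gamma_k^*$. One then verifies that (i) $\gamma_{k+1} = \gamma_k(1 - c\gamma_k)$ is exactly \eqref{eqn:gmink}; (ii) $\lambda_{k+1} = \lambda_k(1-\lambda_k)$ follows by multiplying \eqref{eqn:gmaxk} by $\frac{\eta-\beta L}{2(1+\beta)}$; and (iii) the initial condition $\lambda_0 = c\gamma_0$ holds, since both quantities equal $\frac{(\eta-\beta L)^2 e_0}{4(1+\beta)^2\nu^2}$. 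The hypothesis $0<\gamma_0<1/c$ required by the lemma is supplied by the assumption $e_0 < 2\nu^2/L^2$ together with $\beta<\eta/L$. The lemma's conclusion $\lambda_k = c\gamma_k$ then rearranges to $\Gamma_k^* = (1+\beta)\delta_k^*$.

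There is essentially no obstacle here: the proof is a one-line induction and the only care needed is to keep track of the factor $(1+\beta)$ canceling inside the parenthesis of the $\Gamma_k^*$-recursion so that it reduces exactly to the $\delta_k^*$-recursion. I would present the direct induction since it is self-contained; mentioning the Lemma \ref{lemma:two-rec} identification is optional but stylistically consistent with the way the paper chains these recursive stepsize analyses.
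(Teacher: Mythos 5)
Your proof is correct, and it takes a slightly different (and arguably cleaner) route than the paper. The paper introduces an auxiliary sequence $\{\lambda_k\}$ with $\lambda_{k+1}=\lambda_k(1-\lambda_k)$ and $\lambda_0=\frac{(\eta-\beta L)^2}{4(1+\beta)^2\nu^2}e_0$, and then applies Lemma~\ref{lemma:two-rec} \emph{twice} --- once to get $\lambda_k=\frac{\eta-\beta L}{2}\delta_k^*$ and once to get $\lambda_k=\frac{\eta-\beta L}{2(1+\beta)}\Gamma_k^*$ --- before equating the two. Your primary argument is a bare two-line induction that bypasses the auxiliary sequence entirely, and your alternative packaging uses only a \emph{single} application of Lemma~\ref{lemma:two-rec} by identifying $\lambda_k:=\frac{\eta-\beta L}{2(1+\beta)}\Gamma_k^*$ directly with $c\gamma_k$ for $c=\frac{\eta-\beta L}{2}$, $\gamma_k=\delta_k^*$. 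One thing your direct induction makes visible that the paper's route obscures: the identity $\Gamma_k^*=(1+\beta)\delta_k^*$ is pure algebra and does not actually need the hypothesis $e_0<\frac{2\nu^2}{L^2}$; the paper invokes that hypothesis only to verify the condition $0<\gamma_0<\frac{1}{c}$ of Lemma~\ref{lemma:two-rec}, a condition whose own proof (a plain induction) never uses it. So your self-contained induction is the leaner presentation, and your checks of the base case, the cancellation of the $(1+\beta)$ factor in the inductive step, and the initial-condition match $\lambda_0=c\gamma_0=\frac{(\eta-\beta L)^2 e_0}{4(1+\beta)^2\nu^2}$ are all accurate.
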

\begin{proof}
Suppose that $\{\lambda_k\}$ is defined by $
\lambda_{k+1}=\lambda_{k}(1-\lambda_{k})$, for all$k\geq 0$,
where $\lambda_0=\frac{(\eta-\beta L)^2}{4(1+\beta)^2\nu^2}\,e_0$.
{In what follows,} we apply Lemma \ref{lemma:two-rec} twice to obtain
the result. {By the definition of $\lambda_0$ and $\delta_0^*$, we have
	that
	$\lambda_0=\frac{(\eta-\beta L)}{2}\delta_0^*$. Also, using $e_0
		<\frac{2\nu^2}{L^2}$ and definition of $\lambda_0$, we obtain
		\[\lambda_0 =\frac{(\eta- \beta L)^2}{4(1+\beta)^2\nu^2}e_0 <
		\frac{(\eta- \beta L)^2}{2(1+\beta)^2L^2}\leq
		\frac{\eta^2}{2L^2}<1.\] Therefore, the conditions of Lemma
		\ref{lemma:two-rec} hold for sequences
		$\{\lambda_k\}$ and $\{\delta_k^*\}$.
		Hence, Lemma \ref{lemma:two-rec} yields that for all $k \geq 0$,
\begin{align*}
& \lambda	_k =  \frac{(\eta-\beta L)}{2}\delta_k^*.
\end{align*}
Similarly, invoking Lemma \ref{lemma:two-rec} again, we have $\lambda	_k =\frac{(\eta-\beta L)}{2(1+\beta)}\Gamma_k^*$.
Therefore, from the two preceding relations, we can conclude the desired relation.}
Therefore, for all $k \geq 0$, $\Gamma_k^*=(1+\beta)\delta_k^*$.   
\end{proof}
The earlier set of results are essentially  adaptive rules for
determining the upper and lower bound of stepsize sequences, i.e.
$\{\delta_k^*\}$ and $\{\Gamma_k^*\}$. The
next proposition proposes recursive stepsize schemes for each player of game \eqref{eqn:problem}.  

\begin{proposition}{[Distributed adaptive steplength SA rules]}\label{prop:DASA}
Suppose that Assumption \ref{assum:different_main} and \ref{assum:w_k_bound} hold. Assume that set $X$ is bounded, i.e. there exists a positive constant $D \triangleq \max_{x,y \in X}\|x-y\|$. Suppose that the stepsizes for any player $i=1, \ldots,N$ are given by the following recursive equations
Suppose that Assumption \ref{assum:different_main} and \ref{assum:w_k_bound} hold. Assume that set $X$ is bounded, i.e. there exists a positive constant $D \triangleq \max_{x,y \in X}\|x-y\|$. Suppose that the stepsizes for any player $i=1, \ldots,N$ are given by the following recursive equations
\begin{align}
& \gamma_{0,i}=r_i\frac{c}{(1+\frac{\eta-2c}{L})^2\nu^2}\,D^2\label{eqn:gi0}
\\
& \gamma_{k,i}=\gamma_{k-1,i}\left(1-\frac{c}{r_i}\gamma_{k-1,i}\right) 
\quad \hbox{for all }k\ge 1.\label{eqn:gik}
\end{align}
where $r_i$ is an arbitrary parameter associated with $i$th player such
that $r_i \in [1, 1+\frac{\eta-2c}{L}]$, $c$ is an arbitrary fixed
constant $0<c < \frac{\eta}{2}$, $L$ is the Lipschitz constant of
mapping $F$, and $\nu$ is the upper bound given by Assumption
\ref{assum:w_k_bound} such that $D < \sqrt{2}\frac{\nu}{L}$. Then,
	{the following hold:}
\begin{itemize}
\item [(a)]  \an{$\frac{\g_{k,i}}{r_i}=\frac{\g_{k,j}}{r_j}$ for any $i,j=1,\ldots, N$ and $k \geq 0$.}
\item [(b)] Assumption \ref{assum:step_error_sub}b holds with $\beta=\frac{\eta -2c}{L}$, $\delta_k=\delta_k^*$, $\Gamma_k=\Gamma_k^*$, and $e_0=D^2$, where $\delta_k^*$ and $\Gamma_k^*$ are given by \eqref{eqn:gmin0}--\eqref{eqn:gmink} and \eqref{eqn:gmax0}--\eqref{eqn:gmaxk} respectively.
\item [(c)] The sequence $\{x_k\}$
generated by algorithm (\ref{eqn:algorithm_different}) converges \fy{a.s.}
to the unique solution of stochastic VI$(X,F)$.
\item [(d)] The results of Proposition \ref{prop:rec_results} hold for $\delta_k^*$ when $e_0=D^2$.  
\end{itemize}
\end{proposition}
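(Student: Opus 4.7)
The proof splits naturally into the four parts, and the main leverage is obtained by isolating the quantity $\tilde\gamma_k \triangleq \gamma_{k,i}/r_i$, which I expect to be player-independent.

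For part (a), I would divide the recursion \eqref{eqn:gik} by $r_i$ to obtain $\tilde\gamma_{k,i} = \tilde\gamma_{k-1,i}(1-c\,\tilde\gamma_{k-1,i})$, and note that by \eqref{eqn:gi0} the initial value $\tilde\gamma_{0,i} = c\,D^2/((1+(\eta-2c)/L)^2\nu^2)$ is independent of $i$. A straightforward induction then yields $\tilde\gamma_{k,i}=\tilde\gamma_{k,j}$ for all $i,j,k$, which is exactly the claim.

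For part (b), set $\beta = (\eta-2c)/L$. Then $\beta \ge 0$ (since $c \le \eta/2$) and $\beta < \eta/L$ (since $c > 0$). Substituting $\beta$ and $e_0=D^2$ into \eqref{eqn:gmin0} gives $\delta_0^* = c\,D^2/((1+\beta)^2\nu^2)$, while \eqref{eqn:gmink} specializes to $\delta_k^* = \delta_{k-1}^*(1-c\,\delta_{k-1}^*)$; both match the recursion for $\tilde\gamma_k$, so $\delta_k^* = \tilde\gamma_k$ by induction. By Lemma 3 this forces $\Gamma_k^* = (1+\beta)\,\tilde\gamma_k$. Since $r_i \in [1, 1+\beta]$, I obtain $\min_i \gamma_{k,i} = (\min_i r_i)\,\tilde\gamma_k \ge \tilde\gamma_k = \delta_k^*$ and $\max_i \gamma_{k,i} = (\max_i r_i)\,\tilde\gamma_k \le (1+\beta)\,\tilde\gamma_k = \Gamma_k^*$, so the choices $\delta_k = \delta_k^*, \Gamma_k = \Gamma_k^*$ bracket the player stepsizes. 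Then $(\Gamma_k - \delta_k)/\delta_k = \beta$, verifying Assumption \ref{assum:step_error_sub}b.

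For part (c), I first need Assumption \ref{assum:step_error_sub}a so that Proposition \ref{prop:rel_bound}b applies. Writing $\gamma_{k,i} = r_i\,\delta_k^*$ and invoking the remark after \eqref{eqn:gmaxk} (that $\{\Gamma_k^*\}$, and hence $\{\delta_k^*\}$, is non-summable but square-summable, and goes to zero), I deduce $\sum_k \gamma_{k,i} = \infty$ and $\sum_k \gamma_{k,i}^2 < \infty$. Combined with part (b) and the hypothesized Assumptions \ref{assum:different_main} and \ref{assum:w_k_bound}, all premises of Proposition \ref{prop:rel_bound}b are in force, yielding almost sure convergence of $\{x_k\}$ to the unique solution of VI$(X,F)$.

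Finally, for part (d), I would just check the standing hypothesis of Proposition \ref{prop:rec_results}, namely $e_0 < 2\nu^2/L^2$. Under the assumption $D < \sqrt{2}\,\nu/L$ in the present proposition, $e_0 = D^2 < 2\nu^2/L^2$ is immediate, so Proposition \ref{prop:rec_results} applies verbatim to the sequence $\{\delta_k^*\}$. I expect the only delicate point to be part (b), where care is needed to identify the reciprocal $\tilde\gamma_k$ with $\delta_k^*$ so that the bounds and the ratio condition line up exactly with $\beta = (\eta-2c)/L$; the rest is bookkeeping.
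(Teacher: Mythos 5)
Your proposal is correct and follows essentially the same route as the paper: your normalized sequence $\tilde\gamma_k=\gamma_{k,i}/r_i$ is exactly the paper's $\lambda_k/c$ obtained via Lemma~\ref{lemma:two-rec}, the identification of $\delta_k^*$ and $\Gamma_k^*$ with the extremal cases $r_i=1$ and $r_i=1+\beta$ together with Lemma~\ref{lemma:min_maxm_relation} is the paper's part~(b), and parts (c)--(d) rest on the same external square-summability result and the check $e_0=D^2<2\nu^2/L^2$. The only nit is a citation slip: the relation $\Gamma_k^*=(1+\beta)\delta_k^*$ is Lemma~\ref{lemma:min_maxm_relation}, not Lemma~\ref{lemma:two-rec}.
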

\begin{proof}
\noindent (a) \
Consider the sequence $\{\lambda_k\}$ given by
\begin{align*}
& \lambda_0=\frac{c^2}{(1+\frac{\eta-2c}{L})^2\nu^2}\,D^2,
\\
& \lambda_{k+1}=\lambda_k(1-\lambda_k), 
\quad \hbox{for all }k\ge 1.
\end{align*}
Since for any ${i=1,\ldots, N}$, we have $\lambda_0=\frac{c}{r_i}\, \gamma_{0,i}$, using Lemma \ref{lemma:two-rec}, we obtain that for any $1 \leq i \leq N$ and $k \geq 0$,
\[\lambda_k=\frac{c}{r_i}\, \g_{k,i}.\] 
Therefore, for any $1 \leq i,j \leq N$, we obtain the desired relation in part (a).

\noindent (b) \ First we show that $\delta_k^*$ and $\Gamma_k^*$
are well defined. Consider the relation of part (a). Let $k\ge 0$ be
arbitrarily fixed. If $\g_{k,i}>\g_{k,j}$ for some $i \neq j$, then we
have $r_{i}>r_{j}.$ Therefore, the minimum possible $\g_{k,i}$ {is
	obtained} with $r_i=1$ and the maximum possible $\g_{k,i}$ {is
		obtained} with $r_i=1+\frac{\eta-2c}{L}$. Now, consider
		\eqref{eqn:gi0}--\eqref{eqn:gik}. If, $r_i=1$, and 
		$D^2$ is replaced by $e_0$, and $c$ by $\frac{\eta-\beta L}{2}$, we get the
		same recursive sequence defined by
		\eqref{eqn:gmin0}--\eqref{eqn:gmink}. Therefore, since the
		minimum possible $\g_{k,i}$ {is achieved} when $r_i=1$, we
		conclude that $\delta_k^* \leq \min_{i=1,\ldots,N} \g_{k,i}$ for
		any $k\ge 0$. This shows that $\delta_k^*$ is well-defined in
		the context of Assumption \ref{assum:step_error_sub}b.
		Similarly, it can be shown that $\Gamma_k^*$ is also
		well-defined in the context of  Assumption
		\ref{assum:step_error_sub}b. Now, Lemma~\ref{lemma:min_maxm_relation} implies that
		$\Gamma_k^*=(1+\frac{\eta-2c}{L})\delta_k^*$ for any $k \geq 0$,
		which shows that {Assumption} \ref{assum:step_error_sub}b is satisfied since $\beta=\frac{\eta-2c}{L}$ and $0<c<\frac{\eta}{2}$.

\noindent (c) In view of Proposition \ref{prop:rel_bound}, to show the
almost-sure convergence, it suffices to show that Assumption
\ref{assum:step_error_sub} holds. Part (b) implies that Assumption
\ref{assum:step_error_sub}b holds for the specified choices. Since
$\g_{k,i}$ is a recursive sequence for each $i$, Assumption
\ref{assum:step_error_sub}a holds using Proposition 3 in \cite{Farzad1}.

\noindent (d) Since $D < \sqrt{2}\frac{\nu}{L}$, it follows that $e_0 <\frac{2\n^2}{L^2}$, which shows that the conditions of Proposition \ref{prop:rec_results} are satisfied.
\end{proof}
\fy{
\section{Numerical results}\label{sec:numerical}
In this section, we report the results of our numerical experiments on a stochastic bandwidth-sharing problem in
communication networks (Sec.  \ref{sec:traffic}). We
	compare the performance of the distributed adaptive stepsize SA
	scheme (DASA) given by (\ref{eqn:gi0})--(\ref{eqn:gik}) with that of
	SA schemes with harmonic stepsize sequences (HSA), where agents use
	the stepsize $\frac{\theta}{k}$ at iteration $k$. More precisely, we
	consider three different values of the parameter $\theta$, i.e.,
	$\theta = 0.1$, $1$, and $10$. This diversity of choices allows us to observe
	the sensitivity of the HSA scheme to different settings of the
	parameters.
	   
\subsection{A bandwidth-sharing problem in computer networks}\label{sec:traffic}
We consider a communication network where users compete for the
bandwidth. Such a problem can be captured by an optimization framework (cf. \cite{Cho05}). Motivated by this model, we consider a network
with $16$ nodes, $20$ links and $5$ users. Figure \ref{fig:network}
shows the configuration of this network. \begin{figure}[htb]
\begin{center}
 \includegraphics[scale=.30]{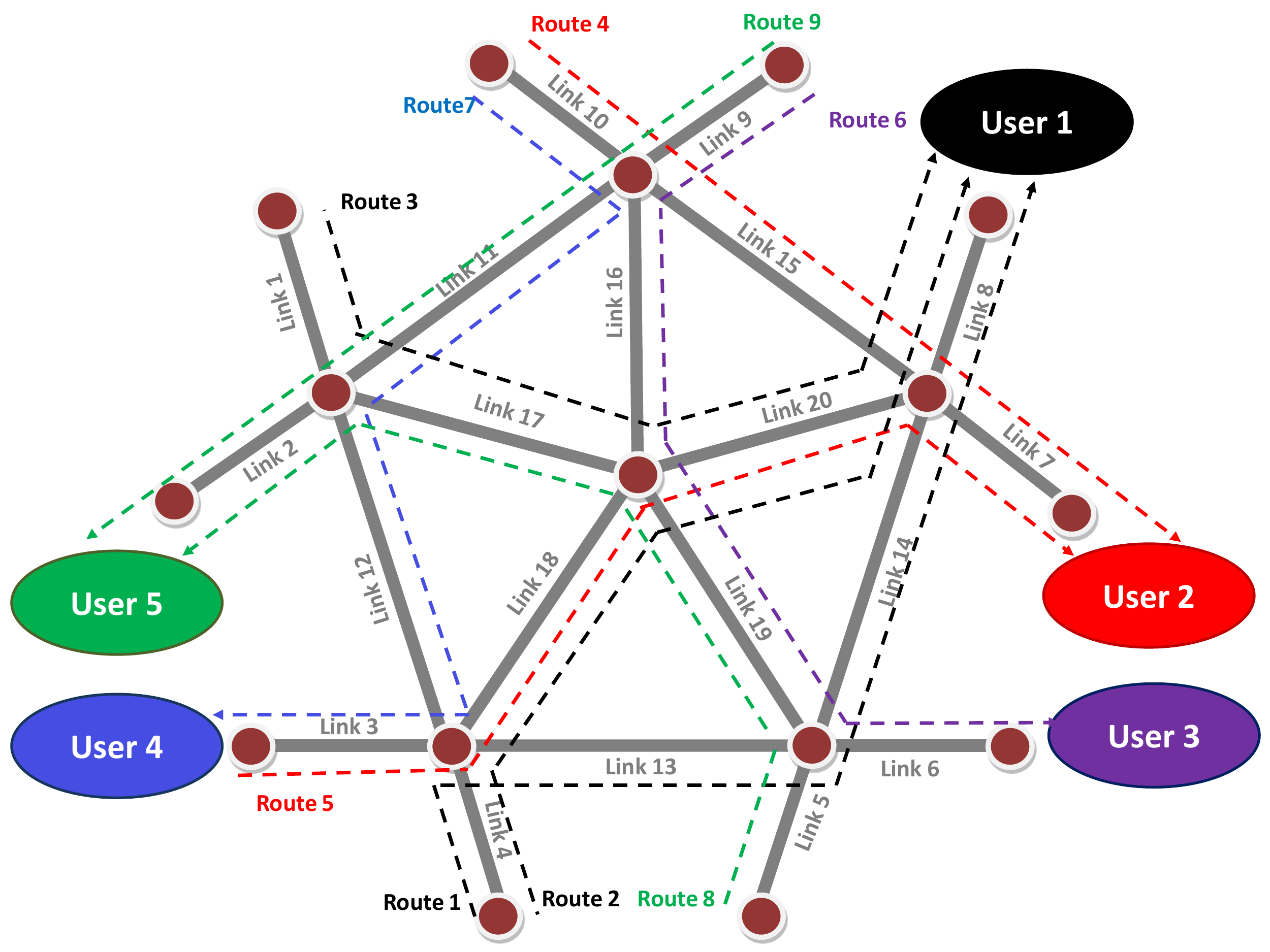}
 \caption{The network}
  \label{fig:network}
   \end{center}
 \vspace{-0.1in}  
 \end{figure} Users have access to  different routes as shown in Figure
 \ref{fig:network}. For example, user $1$ can  access routes $1$, $2$, and $3$. 
 Each user is characterized by a cost function. Additionally, there is a
congestion cost function that depends on the aggregate flow. More
specifically, the cost function
user $i$ with flow rate (bandwidth) $x_i$ is defined by
\[f_i(x_i,\xi_i)\triangleq -\sum_{r \in \mathcal{R}(i)} \xi_{i}(r)\log(1+x_i(r)),\]
for $i=1,\ldots, 5$, where $x\triangleq (x_1; \ldots; x_5)$ is the flow decision vector of the users, $\xi \triangleq (\xi_1; \ldots; \xi_5)$ is a random parameter corresponding to the different users, 
$\mathcal{R}(i)=\{1,2,\ldots,n_i\}$ is the set of routes assigned to the $i$-th user, $x_{i}(r)$ and $\xi_{i}(r)$ 
are the $r$-th element of the decision vector $x_i$ and the random vector $\xi_i$, respectively. 
We assume that
$\xi_i(r)$ is drawn from a uniform distribution for each $i$ and $r$ and the links have limited capacities given by $b$.
% More precisely, $\xi_1(1)$, $\xi_1(2)$, and $\xi_1(3)$ are i.i.d.\ and uniformly distributed in $[1-0.1, 1+0.1]$, 
%$\xi_2(1)$ and $\xi_2(2)$ are i.i.d.\ and uniformly distributed in $[1.4-0.2, 1.4+0.2]$, 
%$\xi_3(1)$ and $\xi_4(1)$ are i.i.d.\ and uniformly distributed in $[0.8-0.05, 0.8+0.05]$ and $[1.6-0.2, 1.6+0.2]$, respectively, and 
%$\xi_5(1)$ and $\xi_5(2)$ are i.i.d and uniformly distributed in $[1.2-0.1, 1.2+0.1]$. 

%\begin{align*}b=(&10; 15; 15; 20; 10; 10; 20; 30; 25; 15; \cr & 20; 15; 10; 10; 15; 15; 20; 20; 25; 40).\end{align*}  
We may define the routing matrix $A$ that describes the relation between set of routes $\mathcal{R}=\{1,2,\ldots,9\}$ and set of links $\mathcal{L}=\{1,2,\ldots,20\}$. Assume that $A_{lr}=1$ if route $r \in \mathcal{R}$ goes through link $l \in \mathcal{L}$ and $A_{lr}=0$ otherwise. Using this matrix, the capacity constraints of the links can be described by $Ax \leq b$.

We formulate this model as a stochastic optimization problem given by
\begin{align}\label{eqn:network-prob}
\displaystyle \mbox{minimize} \qquad &  \sum_{i=1}^N \EXP{f_i(x_i,\xi_i)} +c(x) \\
\mbox{subject to} \qquad& Ax \leq b, \hbox{ and } x \geq 0,\notag
\end{align}
where $c(x)$ is the network congestion cost.
We consider this cost of the form $c(x)=\|Ax\|^2$. 
Problem (\ref{eqn:network-prob}) is a convex optimization problem and the optimality conditions can be stated as a variational inequality given by $\nabla f(x^*)^T(x-x^*) \geq 0$, where $f(x) \triangleq \sum_{i=1}^N \EXP{f_i(x_i,\xi_i)} +c(x)$. Using our notation in Sec. \ref{sec:formulation}, we have $$F(x)=-\left( 
\frac{\bar \xi_1(1)}{1+x_1(1)};\ldots;
%\frac{\bar \xi_i(r_i)}{1+x_i(r_i)};
%\ldots;
\frac{\bar\xi_5(2)}{1+x_5(2)} \right)+2A^TAx,$$
where $\bar \xi_i(r_i) \triangleq \EXP{\xi_i(r_i)}$ for any $i=1,\ldots,5$ and $r_i=1,\ldots,n_i$. It can be shown
that the mapping $F$ is strongly monotone and Lipschitz with specified parameters (cf. \cite{Farzad03}). We solve the bandwidth-sharing problem for $12$
different settings of parameters shown in Table
\ref{tab:network_errors1}. We consider $4$ parameters in our model that
scale the problem. Here, $m_b$ denotes the multiplier of the capacity
vector $b$, $m_c$ denotes the multiplier of the congestion cost function
$c(x)$, and $m_\xi$ and $d_\xi$ are two multipliers that parametrize the
random variable $\xi$. $S(i)$
denotes the $i$-th setting of parameters. For each of these $4$
parameters, we consider $3$ settings where one parameter changes and
other parameters are fixed. This allows us to observe the sensitivity of
the algorithms with respect to each of these parameters. 
\begin{table}[htb] 
\vspace{-0.05in} 
\tiny 
\centering 
\begin{tabular}{|c|c|c|c|c|c|} 
\hline 
-&S$(i)$ &  $m_b$ & $m_c$ & $m_\xi$ & $d_\xi$
\\ 
\hline 
\hline
{$m_b$} &1 &  1 & 1 & 5 & 2
  \\

\hbox{ }& 2 &  0.1 & 1 & 5 & 2
  \\

\hbox{ }& 3 &  0.01 & 1 & 5 & 2  \\ 
\hline 

$m_c$ &4 &  0.1 & 2 & 2 & 1
  \\

\hbox{ }& 5 &  0.1 & 1 & 2 & 1
  \\

\hbox{ }& 6 &  0.1 & 0.5 & 2 & 1 \\ 
\hline 

$m_\xi$ &7 &  1 & 1 & 1 & 5
  \\

\hbox{ }& 8 &  1 & 1 & 2 & 5
  \\

\hbox{ }& 9 &  1 & 1 & 5 & 5 \\ 
\hline 
$d_\xi$ &10 &   1 & 0.01 & 1 & 1
  \\

\hbox{ }& 11 & 1 & 0.01 & 1 & 2
  \\

\hbox{ }& 12 &  1 & 0.01 & 1 & 5\\
\hline

\end{tabular} 
\caption{Parameter settings} 
\label{tab:network_errors1} 
\vspace{-0.1in} 
\end{table}	
The SA algorithms are terminated after $4000$ iterates.
To measure the error of the schemes, we run each scheme $25$ times and
then compute the mean squared error (MSE) using the metric $\frac{1}{25}\sum_{i=1}^{25}\|x_k^i-x^*\|^2$ for any $k=1,\ldots,4000$, where $i$ denotes the $i$-th sample.
Table \ref{tab:Traffic2} and \ref{tab:Traffic3} show the $90\%$ confidence intervals (CIs) of the error for the DASA and HSA schemes. 
\begin{table}[htb] 
\vspace{-0.05in} 
\tiny 
\centering 
\begin{tabular}{|c|c||c||c|} 
\hline 
-&S$(i)$ &   DASA - $90\%$ CI &  HSA with $\theta=0.1$- $90\%$ CI %& HSA with $\theta=1$ - $90\%$ CI & HSA with $\theta=10$ - $90\%$ CI
\\ 
\hline 
\hline
  {$m_b$} &1 & [$2.97 $e${-6}$,$4.66 $e${-6}$] &  [$1.52 $e${-6}$,$2.37 $e${-6}$] %&  [$1.70 $e${-6}$,$2.97 $e${-6}$]&  [$1.33 $e${-5}$,$1.81 $e${-5}$]
  \\

\hbox{ }&2 & [$2.97 $e${-6}$,$4.66 $e${-6}$] &  [$1.52 $e${-6}$,$2.37 $e${-6}$] %&  [$1.70 $e${-6}$,$2.97 $e${-6}$]&  [$1.33 $e${-5}$,$1.81 $e${-5}$]
  \\

\hbox{ }&3& [$1.15 $e${-7}$,$3.04 $e${-7}$] &  [$2.12 $e${-8}$,$4.92 $e${-8}$] %&  [$4.66 $e${-8}$,$1.17 $e${-7}$]&  [$8.07 $e${-7}$,$2.43 $e${-6}$]
  \\ 
  
  \hline 
  
  $m_c$ &4 &  [$4.39 $e${-7}$,$6.55 $e${-7}$] &  [$1.33$e${-6}$,$1.80 $e${-6}$] %&  [$4.71 $e${-7}$,$8.75 $e${-7}$]&  [$3.84 $e${-6}$,$5.38 $e${-6}$]
  \\

\hbox{ }&5 &  [$1.29 $e${-6}$,$1.97 $e${-6}$] &  [$9.00$e${-6}$,$1.20 $e${-5}$] %&  [$7.88 $e${-7}$,$1.36 $e${-6}$]&  [$5.61 $e${-6}$,$7.98 $e${-6}$]
  \\

\hbox{ }&6 &  [$3.44 $e${-6}$,$5.36 $e${-6}$] &  [$2.26$e${-4}$,$2.53 $e${-4}$] %&  [$1.25 $e${-6}$,$1.99 $e${-6}$]&  [$7.34 $e${-6}$,$1.12 $e${-5}$]
  \\ 
  \hline

 $m_\xi$ &7& [$4.29 $e${-5}$,$6.40 $e${-5}$] &  [$7.92 $e${-5}$,$1.49 $e${-4}$] %&  [$2.83 $e${-5}$,$4.75 $e${-5}$]&  [$1.84 $e${-4}$,$2.75 $e${-4}$] 
  \\

\hbox{ }&8 &[$3.18 $e${-5}$,$4.83 $e${-5}$] &  [$3.46 $e${-5}$,$6.07 $e${-5}$] %&  [$1.97 $e${-5}$,$3.39 $e${-5}$]&  [$1.40 $e${-4}$,$1.99 $e${-4}$] 
  \\

\hbox{ }&9 & [$1.83 $e${-5}$,$2.88 $e${-5}$] &  [$6.12 $e${-6}$,$9.99 $e${-6}$] %&  [$1.06 $e${-5}$,$1.85 $e${-5}$]&  [$8.33 $e${-5}$,$1.13 $e${-4}$]
  \\
  \hline
  
  $d_\xi$ &10 &[$3.82 $e${-4}$,$5.91 $e${-4}$] &  [$2.86 $e${+1}$,$2.86 $e${+1}$] %&  [$5.50 $e${-1}$,$5.70 $e${-1}$]&  [$7.23 $e${-5}$,$9.64 $e${-5}$]
  \\ 
 
\hbox{ }&11 & [$9.81 $e${-4}$,$1.44 $e${-3}$] &  [$2.86 $e${+1}$,$2.86 $e${+1}$] %&  [$5.45 $e${-1}$,$5.85 $e${-1}$]&  [$2.85 $e${-4}$,$3.80 $e${-4}$]
  \\  
\hbox{ }&12 &[$6.26 $e${-3}$,$8.44 $e${-3}$] &  [$2.85 $e${+1}$,$2.86 $e${+1}$] %&  [$5.47 $e${-1}$,$6.44 $e${-1}$]&  [$1.77 $e${-3}$,$2.36 $e${-3}$]
  \\ 

  \hline

\end{tabular} 
\caption{$90\%$ CIs for DASA and HSA schemes -- Part I} 
\label{tab:Traffic2} 
\vspace{-0.1in} 
\end{table}

\begin{table}[htb] 
\vspace{-0.05in} 
\tiny 
\centering 
\begin{tabular}{|c|c||c||c|} 
\hline 
-&S$(i)$ &  HSA with $\theta=1$ - $90\%$ CI & HSA with $\theta=10$ - $90\%$ CI
\\ 
\hline 
\hline
  {$m_b$} &1  &  [$1.70 $e${-6}$,$2.97 $e${-6}$]&  [$1.33 $e${-5}$,$1.81 $e${-5}$]
  \\

\hbox{ }&2  &  [$1.70 $e${-6}$,$2.97 $e${-6}$]&  [$1.33 $e${-5}$,$1.81 $e${-5}$]
  \\

\hbox{ }&3 &  [$4.66 $e${-8}$,$1.17 $e${-7}$]&  [$8.07 $e${-7}$,$2.43 $e${-6}$]
  \\ 
  
  \hline 
  
  $m_c$ &4  &  [$4.71 $e${-7}$,$8.75 $e${-7}$]&  [$3.84 $e${-6}$,$5.38 $e${-6}$]
  \\

\hbox{ }&5  &  [$7.88 $e${-7}$,$1.36 $e${-6}$]&  [$5.61 $e${-6}$,$7.98 $e${-6}$]
  \\

\hbox{ }&6 &  [$1.25 $e${-6}$,$1.99 $e${-6}$]&  [$7.34 $e${-6}$,$1.12 $e${-5}$]
  \\ 
  \hline

 $m_\xi$ &7 &  [$2.83 $e${-5}$,$4.75 $e${-5}$]&  [$1.84 $e${-4}$,$2.75 $e${-4}$] 
  \\

\hbox{ }&8  &  [$1.97 $e${-5}$,$3.39 $e${-5}$]&  [$1.40 $e${-4}$,$1.99 $e${-4}$] 
  \\

\hbox{ }&9  &  [$1.06 $e${-5}$,$1.85 $e${-5}$]&  [$8.33 $e${-5}$,$1.13 $e${-4}$]
  \\
  \hline
  
  $d_\xi$ &10 &  [$5.50 $e${-1}$,$5.70 $e${-1}$]&  [$7.23 $e${-5}$,$9.64 $e${-5}$]
  \\ 
 
\hbox{ }&11&  [$5.45 $e${-1}$,$5.85 $e${-1}$]&  [$2.85 $e${-4}$,$3.80 $e${-4}$]
  \\  
\hbox{ }&12 &  [$5.47 $e${-1}$,$6.44 $e${-1}$]&  [$1.77 $e${-3}$,$2.36 $e${-3}$]
  \\ 

  \hline

\end{tabular} 
\caption{$90\%$ CIs for DASA and HSA schemes -- Part II} 
\label{tab:Traffic3} 
\vspace{-0.1in} 
\end{table}	

 \textbf{Insights:} 
We observe that DASA scheme performs favorably and is far more robust in
comparison with the HSA schemes with different choice of $\theta$.
Importantly, in most of the settings, DASA stands close to the HSA
scheme with the minimum MSE.  Note that when $\theta=1$ or $\theta=10$,
the stepsize $\frac{\theta}{k}$ is not within the interval
$(0,\frac{\eta-\beta L}{(1+\beta)^2L^2}]$ for small $k$ and is not
		feasible in the sense of Prop.  \ref{prop:rec_results}.
		Comparing the performance of each HSA scheme in different
		settings, we observe that HSA schemes are fairly sensitive to
		the choice of parameters. For example, HSA with $\theta=0.1$
		performs very well in settings S$(1)$, S$(2)$, and S$(3)$, while
		its performance deteriorates in settings S$(10)$, S$(11)$, and
		S$(12)$. A similar discussion holds for other two HSA schemes. A
		good instance of this argument is shown in Figure \ref{fig:traffic_all_s4} and \ref{fig:traffic_all_s11}.
  %\begin{figure}[htb]
 %\centering
%\label{fig:traffic_prob1}\includegraphics[scale=.33]{}
%\caption{DASA vs. HSA schemes -- Setting S($1$)}
%\label{fig:traffic_all_s1}
%\end{figure}

  \begin{figure}[htb]
 \centering
\label{fig:traffic_prob4}\includegraphics[scale=.33]{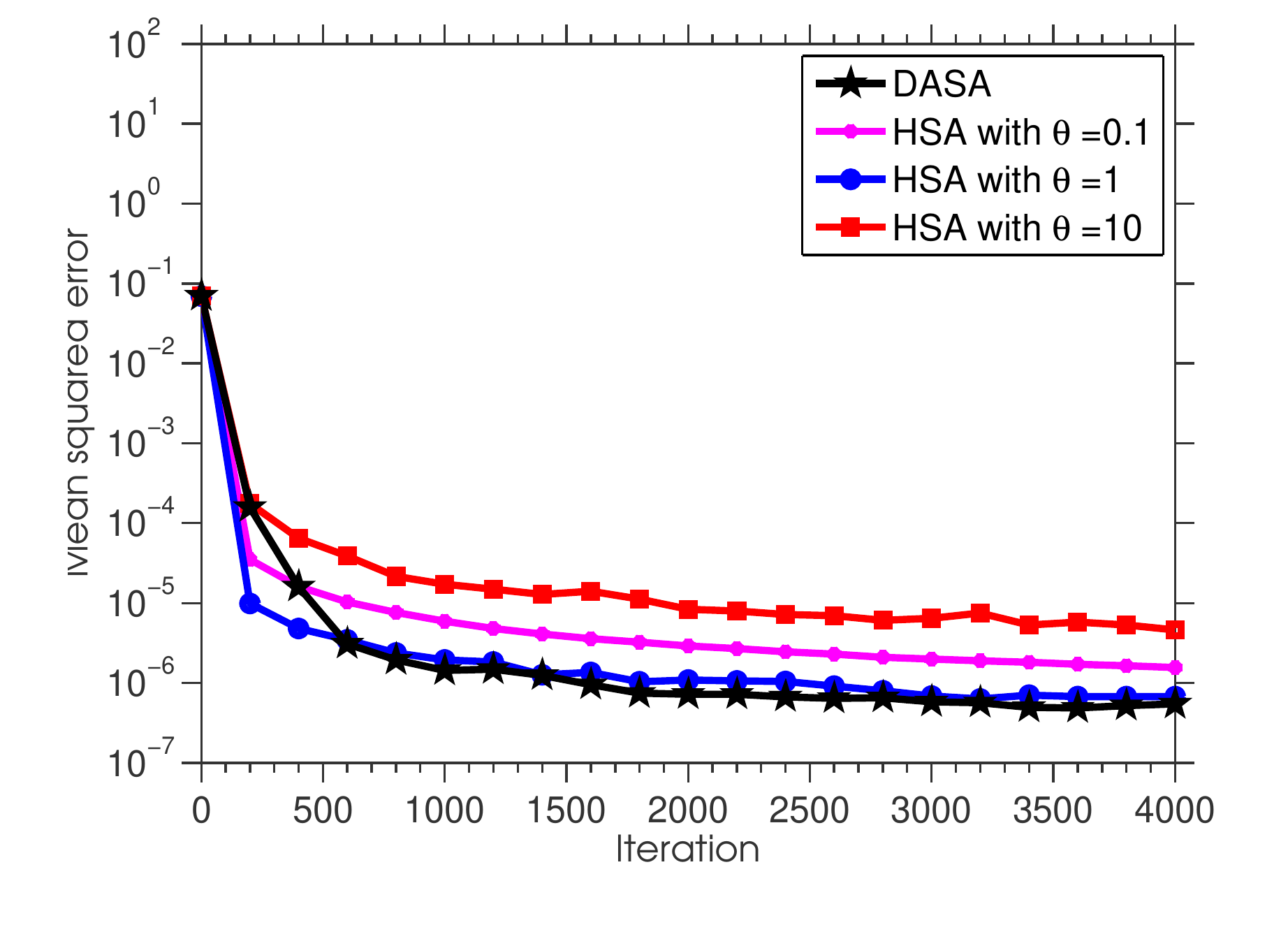}
\caption{DASA vs. HSA schemes -- Setting S($4$)}
\label{fig:traffic_all_s4}
\end{figure}
\vspace{-.1in}
  \begin{figure}[htb]
 \centering
\label{fig:traffic_prob11}\includegraphics[scale=.33]{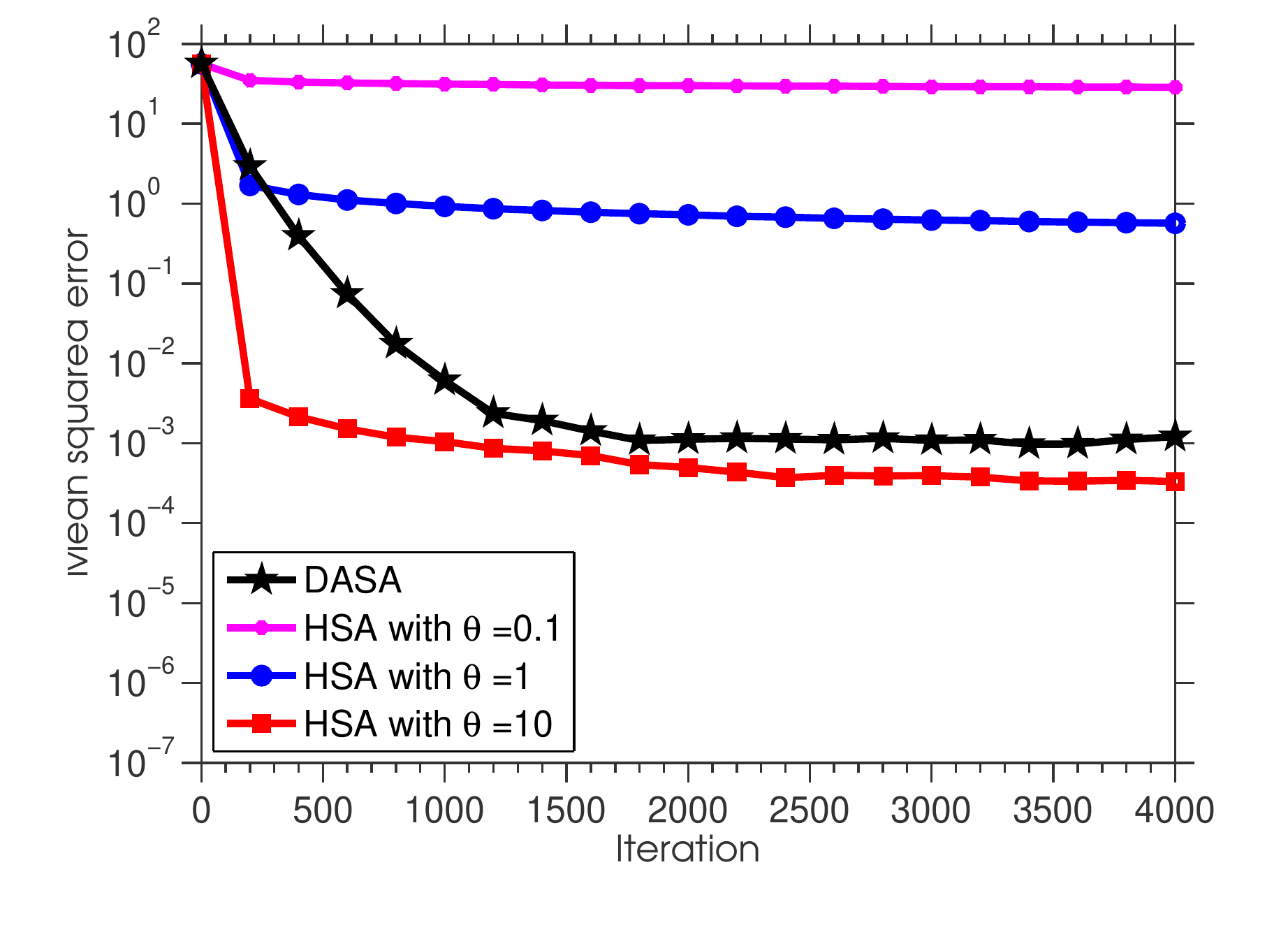}
\caption{DASA vs. HSA schemes -- Setting S($11$)}
\label{fig:traffic_all_s11}
\end{figure}}
\vspace{-.1in}
 %\begin{figure}[htb]
 %\includegraphics[width=3in,height=2in]{}
 %\caption{The}
%\end{figure}
\section{Concluding remarks} 
We considered distributed monotone stochastic Nash games where each player minimizes a convex function 
on a closed convex set. We first formulated the problem as a stochastic
VI and  then showed that under suitable conditions, for a strongly monotone and Lipschitz mapping, the SA scheme guarantees almost-sure convergence to the solution. 
Next, motivated by the naive stepsize choices of SA algorithm, we proposed a class of distributed adaptive steplength rules where each player can choose his own stepsize independent of 
the other players from a specified range. We showed that this scheme provides 
almost-sure convergence and also minimizes a suitably defined error bound of the SA algorithm. 
Numerical experiments, reported in Section \ref{sec:numerical} confirm this conclusion.

\bibliographystyle{IEEEtran}
\bibliography{Nash_adapt}
\end{document}